\tikzstyle arrowstyle=[scale=1]
\tikzstyle directed=[postaction={decorate,decoration={markings,
    mark=at position .55 with {\arrow[arrowstyle]{stealth}}}}]
\tikzstyle ddirected=[postaction={decorate,decoration={markings,
    mark=at position .45 with {\arrow[arrowstyle]{stealth}},
    mark=at position .55 with {\arrow[arrowstyle]{stealth}}}}]
\tikzstyle reverse directed=[postaction={decorate,decoration={markings,
    mark=at position .55 with {\arrowreversed[arrowstyle]{stealth};}}}]
\tikzstyle reverse ddirected=[postaction={decorate,decoration={markings,
    mark=at position .55 with {\arrowreversed[arrowstyle]{stealth};},
    mark=at position .65 with {\arrowreversed[arrowstyle]{stealth};}}}]
\theoremstyle{plain}
\newtheorem{para}{}[section]
\newtheorem{prop}[para]{Proposition}
\newtheorem{lemma}[para]{Lemma}
\newtheorem{cor}[para]{Corollary}
\newtheorem{thm_star}[para]{Theorem*}
\newtheorem{prop_star}[para]{Proposition*}
\newtheorem{fact}[para]{Fact}
\newtheorem*{factstar}{Fact}
\theoremstyle{remark}
\newtheorem*{remark}{Remark}
\theoremstyle{definition}
\newtheorem*{dfnstar}{Definition}
\newtheorem{dfn}[para]{Definition}
\newtheorem{example}[para]{Example}
\newtheorem{conjecture}[para]{Conjecture}
\newcommand{\co}{\colon\thinspace}
\newcommand{\bound}{\partial}
\newcommand{\C}{\mathbb{C}}
\renewcommand{\H}{\mathbb{H}}
\newcommand{\Q}{\mathbb{Q}}
\newcommand{\R}{\mathbb{R}}
\newcommand{\Z}{\mathbb{Z}}
\newcommand\bu{\mathbf{u}}
\newcommand\bz{\mathbf{z}}
\newcommand\fg{\mathfrak{g}}
\newcommand\fl{\mathfrak{l}}
\newcommand\fm{\mathfrak{m}}
\begin{document}

\title{Generic hyperbolic knot complements without hidden symmetries}

\author{Eric Chesebro}
\address{Department of Mathematical Sciences, University of Montana} 
\email{Eric.Chesebro@mso.umt.edu} 

\author{Jason DeBlois}
\address{Department of Mathematics, University of Pittsburgh} \email{jdeblois@pitt.edu}

\author{Priyadip Mondal}
\address{Department of Mathematics, University of Pittsburgh} \email{prm50@pitt.edu}

\begin{abstract}  We establish a pair of criteria for proving that most knots obtained by Dehn surgery on a given two-component hyperbolic link lack hidden symmetries.  To do this, we use certain rational functions on varieties associated to the link. We apply our criteria to show that among certain infinite families of knot complements, all but finitely many members lack hidden symmetries.
\end{abstract}


\maketitle

A longstanding question in the study of hyperbolic 3-manifolds asks which hyperbolic \emph{knot complements}, the 3-manifolds obtained by removing a knot from $S^3$, have hidden symmetries \cite[p.~307]{NeumReid}.  More recent work of Reid--Walsh \cite{ReidWalsh} and Boileau--Boyer--Cebanu--Walsh \cite{BBoCWaGT} relates this to \cite[Conjecture 5.2]{ReidWalsh}, on commensurability classes of knot complements.  We find the original question intriguing simply because hyperbolic $3$-manifolds with hidden symmetries are quite common --- each manifold that non-normally covers another has them --- but hyperbolic knot complements with hidden symmetries seem quite rare.  In fact only three are known to have hidden symmetries, a great many are known not to, and no new examples have been found since the publication of \cite{NeumReid}. Indeed, the authors Neumann and Reid of \cite{NeumReid} later conjectured that no hyperbolic knot complement in $S^3$ has hidden symmetries, beyond the three already known \cite[Problem 3.64(A)]{KirbyList}. 

The totality of evidence for this conjecture would still seem to allow for reasonable doubt.  Hidden symmetries can be ruled out for (almost) any \textit{particular} knot complement by straightforward computations using SnapPy \cite{SnapPy} and Sage \cite{sagemath} (or Snap, see \cite{CGHN}).  For instance, amongst the $300,000$-odd knot complements with at most $15$ crossings, only that of the figure-$8$ has hidden symmetries.  Existing tools are harder to apply to \textit{families} of knot complements and we only know the following classes to lack hidden symmetries: the two-bridge knots other than the figure-$8$ \cite{ReidWalsh}; the $(-2,3,n)$-pretzels \cite{MacMat}; knots obtained from surgery on the Berge manifold \cite{Hoffman_3comm}, \cite{Hoffman_hidden}; and certain highly twisted pretzel knots with at least five twist regions \cite[Prop.~7.5]{Millichap}\footnote{Results of the recent preprint \cite{HMW} treat a much broader class of highly twisted knots.}.  Works of Hoffman \cite{Hoffman}, Boileau--Boyer--Cebanu--Walsh \cite{BoiBoCWa2}, and Millichap--Worden \cite{MilliWord} also bear on the question from other directions.  

Here we provide a plethora of new classes by giving a method for quickly showing that the {\em generic} member of certain families of knot complements produced by hyperbolic Dehn surgery lacks hidden symmetries. We were partly inspired to this by \cite[Prop.~7.5]{Millichap} and the proof of \cite[Theorem 1.1]{Hoffman_3comm}, which are more specific results in the same direction, but we develop a new tool based on the following motto: 

\begin{quote} The cusp parameters of knot complements obtained from a given hyperbolic link complement $M$ by hyperbolic Dehn filling are recorded by rational functions on the character or deformation variety of $M$ that are smooth near the complete structure.\end{quote}

\noindent Section \ref{generic proof} reviews the varieties and states this precisely, in Propositions \ref{regfun} and \ref{ratfun}. 

This motto is not surprising and has a proof in the vein of Neumann--Zagier's seminal work \cite{NZ} where they showed that \textit{analytic} functions on open subsets of the deformation variety record the cusp parameters of Dehn fillings, cf. \cite[Theorem 4.1]{NR}.  Promoting analyticity to rationality facilitates a global perspective on these functions that we exploit in ~Section \ref{sixtwotwo}.  Note that the rationality of the cusp parameter does have a precedent in the literature, see \cite{NeumReid} (cf.~Section \ref{whitehead}) where this is shown in the special case of the Whitehead link complement. 

Our main contribution is to use our motto to connect the hidden symmetries question to geometric isolation phenomena. A key result for this is Proposition 9.1 of \cite{NeumReid}, which connects hidden symmetries of a  hyperbolic knot complement to the geometry of its cusp.  Recall that a \textit{hidden symmetry}  of a space $M$ is a homeomorphism between finite-degree covers of $M$ that does not descend to $M$.  Most work on hidden symmetries of knot complements, including ours, does not directly use this definition. Instead the notion of a \textit{rigid cusp} has become fundamental.

We will say that the \textit{shape} of a cusp of a complete hyperbolic $3$-orbifold is the Euclidean similarity class of a horospherical cross-section, that such a similarity class is \textit{rigid} if it is represented by a quotient of $\R^2$ by a Euclidean triangle group or its index-two orientation preserving subgroup, and that the cusp is \textit{rigid} if its shape is. By \cite[Prop.~9.1]{NeumReid}, a hyperbolic knot complement with hidden symmetries covers an orbifold with a rigid cusp. The following result is our main technical tool.

\newcommand\TwoCptCor{
Suppose that $L=K \sqcup K' \subset S^3$ is a hyperbolic link with components $K$ and $K'$.  If infinitely many manifolds or orbifolds obtained from $S^3-L$ by Dehn filling the $K'$-cusp cover orbifolds with rigid cusps, then 
\begin{enumerate}
	\item the shape of the $K$-cusp of $S^3-L$ covers a rigid Euclidean orbifold, and 
	\item the $K$-cusp is geometrically isolated from the $K'$-cusp. \end{enumerate}
}

\theoremstyle{plain}
\newtheorem*{TwoCpt corollary}{Corollary \ref{TwoCpt}}
\begin{TwoCpt corollary}\TwoCptCor\end{TwoCpt corollary}

For some context, remember that the knot complement theorem \cite[Theorem 2]{GL} implies that more than one Dehn surgery on a component $K'$ of $L = K\sqcup K'\subset S^3$ yields a knot in $S^3$ only if $K'$ is unknotted.  Moreover, when this is the situation, there are infinitely many.  Indeed, if we take $\mu$ to be a meridian for $K'$ and $\lambda$ to be the peripheral curve corresponding to the boundary of an embedded disk in $S^3-K'$, then for any $n\in\mathbb{Z}$ the $\frac1n$ surgery along $K'$ determined by the slope $\mu+n\lambda$ yields a knot $K_n$ in $S^3$.  See, for example \cite[Ch.~9.H]{Rolfsen}.  Furthermore, if $S^3-L$ is hyperbolic then by the hyperbolic Dehn surgery theorem \cite[Th.~5.8.2]{Th_notes} (cf.~eg.~\cite{NZ}, \cite{PePor}), $S^3-K_n$ is hyperbolic for all but finitely many $n$.    

Criterion (1) of Corollary \ref{TwoCpt} easily translates to a condition that can be numerically checked, and in Section \ref{two component} we use SnapPy and Sage to apply it to the census of two-component links in $S^3$ with crossing number at most $9$, tabulated in Appendix C of \cite{Rolfsen}.  Each of these has at least one unknotted component.

\newcommand\DataThm{Let $L$ be a hyperbolic two-component link in $S^3$ with crossing number at most nine. 
At most finitely many hyperbolic knot complements obtained by Dehn filling one cusp of $S^3-L$ have hidden symmetries.}

\theoremstyle{plain}
\newtheorem*{Data theorem}{Theorem* \ref{data}}
\begin{Data theorem}\DataThm\end{Data theorem}

This result is analogous to the computation for knots up to $15$ crossings that we mentioned earlier, in the sense that certain conditions are checked case by case by computer. We have given it an asterisk to indicate that it was established by non-verified computation. It is possible in principle to prove this rigorously by verified computation, starting with HIKMOT \cite{HIKMOT} or H.~Moser's work \cite{Moser}. But our main intent with this result is instead to illustrate a computational method for establishing, informally but with high confidence, that surgery on a given link will generically not yield knots with hidden symmetries.

We discuss Theorem* \ref{data} in Section \ref{two component}, but first consider two special cases: the Whitehead link $5^2_1$, in Section \ref{whitehead}, and $6^2_2$ in Section \ref{sixtwotwo}. These are exceptional in that they do satisfy condition (1) of Corollary \ref{TwoCpt}. In fact their complements are arithmetic and cover the Bianchi orbifolds $\mathbb{H}^3/\mathrm{PSL}(2,\mathcal{O}_1)$ and $\mathbb{H}^3/\mathrm{PSL}(2,\mathcal{O}_3)$, respectively, which each have rigid cusps. But it follows from \cite{ReidWalsh} that exactly one surgery on a component of $5^2_1$ yields a knot (the figure-eight) whose complement has hidden symmetries, and concerning $6^2_2$ we have:

\newcommand\CorEEE{At most finitely many knot complements in $S^3$ obtained by Dehn filling one cusp of $S^3-6^2_2$ have hidden symmetries.}

\newtheorem*{cor eree}{Corollary \ref{eree}}
\begin{cor eree}\CorEEE\end{cor eree}

We prove Corollary \ref{eree} in Section \ref{sixtwotwo} by describing the deformation variety of $S^3-6^2_2$ and its cusp parameter function, using this to show that the cusps are not geometrically isolated from each other, then appealing to condition (2) of Corollary \ref{TwoCpt}. Section \ref{whitehead} gives a similar, mainly expository, treatment of $S^3-5^2_1$ that draws on existing literature including \cite{NeumReid} and \cite{HLM}.

Here we say that a cusp $c$ of a two-cusped hyperbolic manifold $M$ is \textit{geometrically isolated} from the other, $c'$, if the shape of $c$ changes under at most finitely many hyperbolic Dehn fillings of $c'$. This varies a bit from the original definition of Neumann--Reid \cite{NR}, see the Remark below Corollary \ref{TwoCpt}. In terms of our motto, if $M$ has two cusps it is equivalent to the function measuring the parameter of $c$ being constant on the curve in the character or deformation variety of $M$ containing (almost) all hyperbolic structures where $c$ remains complete, see the proof of Theorem 4.2 of \cite{NR}. The proof of Corollary \ref{TwoCpt}(2) exploits this fact in a similar way to \S 1.2 of D.~Calegari's study of geometric isolation \cite{Calegari}.

Beyond $5^2_1$ and $6^2_2$ only three two-component links with at most nine crossings, which all have complements isometric to that of $5^2_1$, satisfy condition (1) of Corollary \ref{TwoCpt}.  So Theorem* \ref{data} follows from the computer check and the results above.

The second main result of Section \ref{two component}  applies the orbifold surgery conclusion of Corollary \ref{TwoCpt} (1) to certain knots obtained as branched covers over a fixed link.

\newcommand\PMPY{There is a family $\{M_n\}_{n\geq 3}$ of non-AP hyperbolic knot complements such that $\mathrm{vol}\,M_n\to\infty$ as $n\to\infty$ and $M_n$ lacks hidden symmetries for all but finitely many $n$.}

\newtheorem*{pmpy prop}{Proposition* \ref{pmpy}}
\begin{pmpy prop}\PMPY\end{pmpy prop}

A knot $K$ is \textit{AP}, short for {\em accidental parabolic}, if every closed incompressible surface $S\subset S^3-\mathcal{N}(K)$, where $\mathcal{N}(K)$ is a regular neighborhood of $K$, contains an essential closed curve that bounds an annulus immersed in $S^3-\mathcal{N}(K)$ with its other boundary component on $\partial \mathcal{N}(K)$. The class of AP knots vacuously contains all small knots, since these have no closed incompressible surfaces in their complements, and many other substantial and well-studied classes.   For instance the alternating \cite{Menasco} and Montesinos \cite{Adams_toroidal} knots are all AP.

The non-AP knots have emerged as especially inscrutable from the standpoint of hidden symmetries. Indeed, no infinite family of non-AP knot complements appears to have been previously known to lack them. And the main result of \cite{BoiBoCWa2} significantly constrains the possible hidden symmetries on any AP knot complement, suggesting that \textit{non}-AP knot complements may be the best places to look for  hidden symmetries. Proposition* \ref{pmpy} is of special interest in this regard.

Section \ref{examples} applies our methods to other classes of examples, re-proving the generic case of some of the previous results that we listed above: the $(-2,3,n)$-pretzel knots in Example \ref{2 3 n} (cf.~\cite{MacMat}), and the Berge manifold in Example \ref{protoBerge} (cf.~\cite{Hoffman_3comm}, \cite{Hoffman_hidden}).  We also combine our results with existing work of Aaber--Dunfield \cite{AabD} to address the $(-2,3,8)$ pretzel link, in Example \ref{potential}. 

\begin{remark}  The Whitehead link, the $(-2,3,8)$-pretzel link, the Berge manifold and the link $6^2_2$ are the four link complements addressed in this paper satisfy condition (1) of Corollary \ref{TwoCpt} and they all fail Corollary \ref{TwoCpt}(2). It seems interesting that these are exactly the exceptional examples listed in \cite[Table 7]{MartPet}. As observed there, these are the four two-cusped manifolds with least known volume, and the four with least possible (Matveev) complexity $4$. They are also all arithmetic, the former two covering $\mathbb{H}^3/\mathrm{PSL}_2(\mathcal{O}_1)$ and the latter two $\mathbb{H}^3/\mathrm{PSL}_2(\mathcal{O}_3)$.\end{remark}


Our results and methods support the following conjecture.

\begin{conjecture}\label{more generic}  For any $R>0$, at most finitely hyperbolic knot complements have hidden symmetries and volume less than $R$.\end{conjecture}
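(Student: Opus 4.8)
The plan is to reduce the conjecture to a finite problem by J\o rgensen--Thurston theory and then dispatch the finitely many resulting cases with Corollary \ref{TwoCpt}. First I would fix $R>0$ and apply the J\o rgensen--Thurston theorem (see \cite[Ch.~5--6]{Th_notes}) to obtain a \emph{finite} collection $\mathcal{P}$ of cusped hyperbolic $3$-manifolds such that every complete hyperbolic $3$-manifold of volume less than $R$ is obtained by Dehn filling some, possibly none, of the cusps of a member of $\mathcal{P}$. A hyperbolic knot complement has exactly one cusp, so apart from the finitely many members of $\mathcal{P}$ that are themselves knot complements, every such complement of volume less than $R$ arises by filling all-but-one cusp of a multicusped $N\in\mathcal{P}$. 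It therefore suffices to prove that for each fixed $N\in\mathcal{P}$, only finitely many of the knot complements obtained by filling all-but-one cusp of $N$ have hidden symmetries.

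Next I would invoke \cite[Prop.~9.1]{NeumReid}, so that it is enough to bound the fillings covering rigid-cusped orbifolds, and handle the two-cusped parents directly. If a filling of one cusp $c_1$ of a two-cusped $N$ is a knot complement $S^3-K$, then filling the remaining cusp $c_0$ along the meridian of $K$ recovers $S^3$; hence $N$ is the complement of a two-component link $L=K\sqcup K'$ in which $c_0$ and $c_1$ are the $K$- and $K'$-cusps, so Corollary \ref{TwoCpt} applies verbatim. Its contrapositive shows that if condition (1) or condition (2) fails for $N$, then only finitely many fillings of $c_1$ cover rigid-cusped orbifolds. Thus for each two-cusped parent I must check that (1) and (2) do not hold together: when the $c_0$-cusp shape is non-rigid this is immediate from (1), and when it is rigid I would compute the cusp-parameter rational function supplied by Propositions \ref{regfun} and \ref{ratfun} on the distinguished component of the deformation variety of $N$ and show that it is non-constant there, exactly as in the analysis of $6^2_2$ that yields Corollary \ref{eree}; non-constancy is the failure of (2). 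For parents with $m+1\geq 3$ cusps I would need a multicusp analogue of Corollary \ref{TwoCpt}: the $c_0$-parameter is again a rational function, now on the $m$-dimensional locus traced out by filling $c_1,\dots,c_m$, and one wants each fixed rigid value to be attained at only finitely many of the discrete lattice of filling slopes yielding knots.

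The rigid case is where I expect the real difficulty, and in two ways. Even for a \emph{fixed} $R$ the set $\mathcal{P}$ is finite, so the two-cusped rigid parents can in principle be settled by finitely many deformation-variety computations of the kind behind Corollary \ref{eree}; but there is no general theorem forcing a rigid cusp of a multicusped hyperbolic manifold to be \emph{non}-isolated from the others, and a single parent that were simultaneously rigid-cusped and geometrically isolated, with fillings genuinely realizing rigid-cusped covers, would defeat the conjecture by producing infinitely many bounded-volume knot complements with hidden symmetries. The Remark above --- that the four arithmetic exceptional parents all fail condition (2) --- is the crucial evidence that this does not happen, and I expect the heart of any proof to be a structural result, in the spirit of Neumann--Reid \cite{NR} and Calegari \cite{Calegari}, showing that rigidity of a cusp obstructs its geometric isolation. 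Secondly, the higher-cusp generalization is not merely bookkeeping: in dimension $m\geq 2$ a non-constant rational cusp parameter can still take a fixed rigid value along a hypersurface containing infinitely many filling points, so mere non-constancy no longer suffices and one needs finer control --- a first-order variation of the cusp shape in enough independent directions, or a Diophantine input --- to rule out accumulation of rigid-cusped fillings. Establishing both the structural isolation statement and this sharpened multicusp version of Corollary \ref{TwoCpt} is, in my view, the crux of the conjecture.
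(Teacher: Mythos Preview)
The statement you are addressing is a \emph{Conjecture}, not a theorem; the paper offers no proof of it. The authors present it as an open problem, remarking only that ``it admits a reformulation in terms of Dehn surgery using the J\o rgensen--Thurston theory, and we hope that it may prove more approachable than the original.'' So there is no paper-proof to compare against, and your proposal should be read as a strategy outline rather than a proof.

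That said, your outline is exactly the reformulation the authors allude to, and you have correctly identified where it gets stuck. The reduction via J\o rgensen--Thurston to finitely many cusped parents is sound, and your observation that a two-cusped parent producing a knot complement by one filling is automatically a two-component link complement in $S^3$ (so that Corollary~\ref{TwoCpt} applies) is correct. But you have also put your finger on the two genuine obstructions that keep this a conjecture: first, there is no known structural theorem guaranteeing that a parent with a rigid cusp must fail geometric isolation, so even the two-cusped case requires a case-by-case verification whose outcome is not known in general; second, for parents with three or more cusps, non-constancy of the cusp-parameter function on the relevant locus does \emph{not} by itself preclude infinitely many rigid-valued filling points, exactly as you say. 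Your proposal is thus an honest assessment of the problem rather than a proof, and it matches the paper's own framing.
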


This is more modest than Neumann-Reid's conjecture \cite[Problem 3.64(A)]{KirbyList} (cf.~\cite[Conj. 1.1]{BoiBoCWa2}). But it admits a reformulation in terms of Dehn surgery using the ``J\o rgensen--Thurston theory'', and we hope that it may prove more approachable than the original. 

\subsection*{Acknowledgements} We are very grateful to Nathan Dunfield for assistance with SnapPy and Sage, and we thank Nathan and Neil Hoffman for pointing out their relevant works \cite{AabD} and \cite{Hoffman_3comm} (cf.~Examples \ref{potential} and  \ref{protoBerge}).

\section{The cusp parameter as a rational function}\label{generic proof}

Here we will establish our main technical tool Corollary \ref{TwoCpt} for showing that knots obtained by Dehn surgery lack hidden symmetries. Like most other work on this subject, ours exploits the following fundamental characterization due to Neumann--Reid \cite[Proposition 9.1]{NeumReid}: a hyperbolic knot complement in $S^3$ has a hidden symmetry if and only if it covers an orbifold with a rigid cusp. 


We prove the Corollary by combining the next lemma with the fact that the cusp shapes of orbifolds obtained by Dehn filling a fixed cusped hyperbolic manifold $N$ are tracked by a rational function on a variety associated to $N$. This function takes a different form depending on the variety considered, and we will explore it on the character variety in Section \ref{charvar} and the deformation variety in Section \ref{defvar}.

\begin{lemma} \label{lem: finite image}
Suppose for some $B>0$ that $\{M_j\}$ is a collection of complete, one-cusped hyperbolic 3-orbifolds, each with volume at most $B$, such that for every $j$ there is an orbifold cover $M_j\to O_j$ to an orbifold with a rigid cusp.  Then among all $M_j$ there are only finitely many cusp shapes.\end{lemma}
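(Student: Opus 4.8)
The plan is to combine a single volume-and-degree estimate with two standard finiteness facts: that there are only finitely many rigid Euclidean $2$-orbifolds up to similarity, and that a finitely generated group has only finitely many subgroups of each fixed index. The point is to reduce ``finitely many cusp shapes'' to ``finitely many bounded-index covers of finitely many fixed flat orbifolds.''

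First I would record each cusp shape as a covering of flat $2$-orbifolds. Fix $j$ and write $p \co M_j \to O_j$ for the given cover. Since $M_j$ is one-cusped and every cusp of $O_j$ is covered by a cusp of $M_j$, the orbifold $O_j$ has a single cusp, whose shape is rigid by hypothesis. A horospherical cross-section $E_j$ of the cusp of $M_j$ maps under $p$ by a local isometry onto a horospherical cross-section $F_j$ of the cusp of $O_j$, so with the induced Euclidean structures one gets an orbifold covering $E_j \to F_j$ of flat $2$-orbifolds. The shape of $M_j$ is by definition the similarity class of $E_j$, and since $E_j$ carries the flat structure pulled back from the fixed one on $F_j$, this similarity class is determined by the conjugacy class of the subgroup $\pi_1^{\mathrm{orb}}(E_j) \le \pi_1^{\mathrm{orb}}(F_j)$ inside the crystallographic group realizing $F_j$.

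Next I would bound the degrees. Rigidity of the cusp of $O_j$ means $F_j$ is a quotient of $\R^2$ by a Euclidean triangle group or its index-two orientation-preserving subgroup; the only Euclidean triangle groups are those with angle data $(2,3,6)$, $(2,4,4)$, and $(3,3,3)$, so $F_j$ ranges over a finite list $\mathcal{F}$ of flat $2$-orbifolds. Using the universal lower bound $v_0>0$ on the volume of a cusped hyperbolic $3$-orbifold, we get $\mathrm{vol}\,O_j \ge v_0$, whence the covering degree satisfies $\deg p = \mathrm{vol}\,M_j/\mathrm{vol}\,O_j \le B/v_0 =: D$. Because $M_j$ has a single cusp, the degree of the restricted covering $E_j \to F_j$ is at most $\deg p \le D$.

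Finally I would apply the subgroup-counting fact. For each of the finitely many $F \in \mathcal{F}$, the group $\pi_1^{\mathrm{orb}}(F)$ is finitely generated and therefore has only finitely many subgroups of index at most $D$; hence $F$ admits only finitely many covers of degree at most $D$, each determining a single similarity class. Taking the union over $F \in \mathcal{F}$ yields a finite collection of flat $2$-orbifolds, and by the previous steps the shape of every $M_j$ occurs among them, giving the claim. I expect the one genuinely nontrivial ingredient to be the degree bound, which rests on the (standard but deep) existence of a universal positive lower bound for volumes of cusped hyperbolic $3$-orbifolds; everything after that is bookkeeping, modulo checking that the cusp-cross-section covering degree is controlled by $\deg p$, which is immediate here since $M_j$ has exactly one cusp.
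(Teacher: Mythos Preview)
Your proof is correct and follows essentially the same route as the paper's: restrict the orbifold cover to cusp cross-sections, bound the covering degree by $B/v_0$ using the universal positive lower bound on volumes of hyperbolic $3$-orbifolds, and then invoke the fact that a finitely generated group has only finitely many subgroups of bounded index. The only cosmetic difference is that the paper phrases the final step in terms of the index of the uniformizing lattice inside the relevant Euclidean triangle group (getting index $d_j$ or $2d_j$ according to whether the rigid cusp is a triangle orbifold or a turnover), whereas you keep the full finite list $\mathcal{F}$ of rigid flat $2$-orbifolds and bound the degree of $E_j\to F_j$ directly; both lead to the same finiteness.
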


\begin{proof}  For each $j$ the branched cover $M_j\to O_j$ restricts on any horospherical cusp cross-section of $M_j$ to a branched cover of a horospherical cusp cross-section of $O_j$.  It follows that the lattice in $\mathbb{R}^2$ that uniformizes the cusp shape of $M_j$ is a subgroup of the uniformizing lattice for the cusp shape of $O_j$, so of the $(2,3,3)$, $(2,4,4)$, or $(2,3,6)$-triangle group.  The index of this subgroup is either the degree $d_j$ of $M_j\to O_j$ or $2d_j$, depending on whether the cusp cross-section of $O_j$ is a triangle orbifold or turnover.

Let $V$ be the minimal volume of complete hyperbolic $3$-orbifolds \cite{M}.  Then
$$ d_j \leq \frac{B}{V}. $$
The lemma now follows from the basic fact that any finitely generated group, so in particular each of the $(2,3,6)$-, $(2,4,4)$-, and $(3,3,3)$-triangle groups, has only finitely many subgroups of index smaller than a fixed constant.\end{proof}

We now define the invariant of Euclidean tori that we will use to extract information from Lemma \ref{lem: finite image}.

\begin{dfnstar} Suppose $T=\C/\Lambda$ is an oriented Euclidean torus, where $\Lambda \subset \C$ is a lattice, and fix an oriented pair of generators $\mu, \lambda$ for $\Lambda$. The {\it complex modulus of $T$ relative to $(\mu,\lambda)$} is the ratio $\lambda/\mu$ in the upper half-plane $\mathbb{H}^2$. The {\it complex modulus} of $T$ is the orbit of $\lambda/\mu$ under the $\mathrm{PSL}_2(\mathbb{Z})$-action on $\mathbb{H}^2$ by M\"obius transformations, or equivalently, the projection of $\lambda/\mu$ to the modular orbifold $\mathbb{H}^2/\mathrm{PSL}_2(\mathbb{Z})$.
\end{dfnstar}

The definition is motivated by the fact that changing the choice of generating pair changes the relative complex modulus by the action of a M\"obius transformation. By design, the complex modulus does not depend on the choice of a generating set. In fact, complex modulus is an orientation-preserving similarity invariant of $T$.  A similarity between a pair of Euclidean tori lifts to a similarity of $\mathbb{C}$ of the form $z\mapsto \eta z + \tau$ for some $\eta, \tau \in \C$. Conjugating $\Lambda$ by this map in the similarity group of $\mathbb{C}$ has the effect of multiplying all its elements by $\eta$. 

Furthermore, if $\Lambda = \langle\mu,\lambda\rangle$ then conjugating by $z\mapsto (1/\mu) z$ normalizes $\Lambda$, taking $\mu$ to $1$ and $\lambda$ to the complex modulus relative to $(\mu,\lambda)$. We thus have:

\begin{factstar} If $M$ is a cusped hyperbolic 3-orbifold and an oriented Euclidean torus is similar to a cross-section of a cusp of $M$ then the complex modulus of the torus is a complete invariant of the shape of the cusp.\end{factstar}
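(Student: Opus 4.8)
The plan is to observe that the statement is, despite its phrasing in terms of $M$, a purely Euclidean assertion: the role of the orbifold $M$ is only to designate a cusp whose shape is by definition the orientation-preserving similarity class of a horospherical cross-section, and the hypothesis supplies an oriented torus $T=\C/\Lambda$ representing that class. So it suffices to prove that the complex modulus is a complete invariant of oriented Euclidean tori up to orientation-preserving similarity, i.e. the classical identification of $\mathbb{H}^2/\mathrm{PSL}_2(\mathbb{Z})$ with the moduli of such tori. This breaks into two claims: that the complex modulus is constant on each similarity class (so that it is an invariant of the shape at all), and that it separates distinct classes (so that the invariant is complete).

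The first claim is already essentially contained in the paragraphs preceding the statement, and I would just record it: a similarity lifts to $z\mapsto \eta z+\tau$, conjugation by which multiplies every element of $\Lambda$ by $\eta$ and hence leaves the ratio $\lambda/\mu$ unchanged; together with the remark that a change of oriented generating pair alters $\lambda/\mu$ by an element of $\mathrm{PSL}_2(\mathbb{Z})$, this shows the $\mathrm{PSL}_2(\mathbb{Z})$-orbit of $\lambda/\mu$ depends only on the orientation-preserving similarity class of $T$. For the second claim I would normalize: conjugating by $z\mapsto (1/\mu)z$ replaces $\Lambda=\langle\mu,\lambda\rangle$ by the similar lattice $\langle 1,\tau\rangle$ with $\tau=\lambda/\mu\in\mathbb{H}^2$, and likewise for a second torus with lattice $\langle 1,\tau'\rangle$. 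Equal complex moduli means $\tau'=(a\tau+b)/(c\tau+d)$ for integers $a,b,c,d$ with $ad-bc=1$. Because this matrix is unimodular, $(c\tau+d,\,a\tau+b)$ is again a basis of $\langle 1,\tau\rangle$, so dividing by the nonzero complex number $c\tau+d$ exhibits $\langle 1,\tau'\rangle$ as $(c\tau+d)^{-1}\langle 1,\tau\rangle$. Multiplication by a complex scalar is orientation preserving, so the two lattices, and hence the two tori, are orientation-preservingly similar and represent the same cusp shape.

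The content here is entirely classical, so the only thing that really needs care --- and what I would treat as the main point --- is the bookkeeping of orientations. One must check that the convention $\lambda/\mu\in\mathbb{H}^2$ (rather than the lower half-plane) is exactly what pins down the orientation, so that the change-of-basis group acting on the relative modulus is $\mathrm{SL}_2(\mathbb{Z})$ through its quotient $\mathrm{PSL}_2(\mathbb{Z})$ preserving $\mathbb{H}^2$, rather than $\mathrm{GL}_2(\mathbb{Z})$ through $\mathrm{PGL}_2(\mathbb{Z})$; with that established the modulus is well defined in $\mathbb{H}^2/\mathrm{PSL}_2(\mathbb{Z})$ and the similarity produced above is automatically orientation preserving. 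The remainder is routine linear algebra over $\mathbb{Z}$.
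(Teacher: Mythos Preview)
Your proposal is correct and follows essentially the same line as the paper. The paper treats the Fact as an immediate consequence of the two paragraphs preceding it: invariance under similarity (since a lifted similarity multiplies all lattice elements by a fixed $\eta$), and normalization by $z\mapsto(1/\mu)z$ to the lattice $\langle 1,\tau\rangle$. Your write-up is more explicit than the paper on the completeness direction---you spell out that $\tau'=(a\tau+b)/(c\tau+d)$ with $ad-bc=1$ implies $\langle 1,\tau'\rangle=(c\tau+d)^{-1}\langle 1,\tau\rangle$---whereas the paper leaves that step to the reader as classical; but the underlying argument is the same.
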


\begin{dfnstar} For $M$ as above, we refer to the complex modulus of a cusp cross-section as the {\it cusp parameter} of the cusp.\end{dfnstar}

In the subsections that follow, we interpret this basic material in the context of varieties associated to hyperbolic $3$-manifolds.

\subsection{From the perspective of the character variety}\label{charvar} 

The use of representation and character varieties to study three-manifolds goes back at least to Thurston's notes \cite[Chapters 4 \& 5]{Th_notes}. As defined in Section 1 of \cite{CuSh} or Section 4.1 of \cite{Sh}, the $\text{SL}_2 (\C)$-representation variety $R(\Pi)$ of a finitely generated group $\Pi$ is the complex affine algebraic set of representations $\Pi \to \text{SL}_2 (\C)$.  The {\it character} of $\rho \in R(\pi)$, 
\[ \chi_\rho \co \Pi \to \C,\] 
is the function which takes $\gamma \in \Pi$ to $\text{Tr}(\rho(\gamma))$.  As explained in Prop. 1.4.4 and Cor. 1.4.5 of \cite{CuSh}, the set of characters of elements in $R(\Pi)$ is also parametrized by a complex affine algebraic set $X(\Pi)$ and there is a surjective, regular map 
\[ t \co R(\Pi) \to X(\Pi) \] 
which takes representations to their characters. Both $R(\Pi)$ and $X(\Pi)$ are defined over $\Q$.  For each $\gamma \in \Pi$, we have a {\it distinguished trace function} $I_\gamma$ in the coordinate ring $\C[X(\Pi)]$ defined by $I_\gamma(\chi)=\chi(\gamma)$.  Usually, the regular function $I_\gamma \circ t \in \C[R(\Pi)]$ is also denoted as $I_\gamma$. 

It is helpful to notice that an epimorphism $\phi \co \Pi_1 \to \Pi_2$ between finitely generated groups gives a natural inclusion $X(\Pi_2) \to X(\Pi_1)$ given by $\chi \mapsto \chi \circ \phi$.  This inclusion is an affine isomorphism onto its image, so we usually view $X(\Pi_2)$ as a subset of $X(\Pi_1)$.  In particular, when a 3-manifold $M'$ is obtained from a 3-manifold $M$ by Dehn filling of a torus boundary component, we view $X(\pi_1 M')$ as a subset of $X(\pi_1 M)$.

Suppose now that $M$ is a complete hyperbolic manifold $M$ with finite volume.  If $\chi \in X(\pi_1 M)$ is a character of a discrete, faithful representation it is called a {\it canonical character} for $M$.  Prop. 3.1.1 of \cite{CuSh} (attributed there to Thurston) states that $M$ has at least one canonical character.  

If $X$ is an irreducible algebraic component of $X(\pi_1 M)$ which contains a canonical character then $X$ is called a {\it canonical component} of $X(\pi_1 M)$.  Theorem 4.5.1 of \cite{Sh} shows that the dimension of any canonical component for $M$ is equal to the number of cusps of $M$.  This theorem also states that if $\{ \gamma_j \}_1^n$ is a collection of non-trivial peripheral elements of $\pi_1 M$, each of which is carried by a distinct cusp of $M$, then the canonical characters in $X$ are isolated in the subvariety determined by the equations $\{ I_{\gamma_j}^2-4 \}_1^n$.

Next, we state a version of Thurston's Hyperbolic Dehn Surgery Theorem based on that stated in Section 4.11 of \cite{Mar}. For the extension to orbifolds, see eg.~\cite{DM}.  For a cusp $c$ of a $3$-manifold and a fixed generating pair $(\lambda,\mu)$ for $H_1(c)$, the orbifold Dehn fillings of $c$ are parametrized by a pair $(m,n) \in\mathbb{Z}^2$ of \textit{filling coefficients}.  If $r$ is the greatest common factor of $m$ and $n$ then, in the filled orbifold, a simple closed curve representing $\frac{m}{r} \, \lambda + \frac{n}{r} \, \mu$ bounds a cone-disk with an angle of $\frac{2\pi}{r}$ at its center.  The $\infty${\it-surgery} refers to an unfilled cusp, and a sequence of filling coefficients $(m_k,n_k)$ \textit{approaches $\infty$} if it escapes compact sets as $k\to\infty$. (This does not depend on the choice of generating pair.)

\theoremstyle{plain}
\newtheorem*{DSthrm}{Hyperbolic Dehn Surgery Theorem} 
\begin{DSthrm} 
Suppose that $M$ is a non-compact complete hyperbolic orbifold with finite volume.  Assume that $\{M_j\}_1^\infty$ is a sequence of 3-orbifolds obtained from $M$ by distinct Dehn fillings of $M$ whose filling coefficients approach $(\infty,\hdots,\infty)$.  
\begin{enumerate}
\item There is a positive integer $N$ such that if $j \geq N$ then $M_j$ admits a complete hyperbolic structure.  
\item There is a sequence of characters $\{ \chi_j \}_N^\infty \subset X(\pi_1 M)$ where, for each $j$, $\chi_j$ is a canonical character for $M_j$ and the sequence $\{\chi_j\}$ converges to a canonical character for $M$.
\end{enumerate}
\end{DSthrm}

\begin{prop}\label{regfun} 
Let $L=K \sqcup L'$ be a link in $S^3$, where $K$ is a knot and $L' \neq \emptyset$.  Suppose that $M=S^3-L$ is hyperbolic, $\chi_0\in X(\pi_1 M)$ is a canonical character for $M$, and that $X_0 \subset X(\pi_1 M)$ is the canonical component containing $\chi_0$.  There is a function $C \in \C(X_0)$ and a neighborhood $U_0$ of $\chi_0$ in $X_0$ such that if $\chi\in U_0$ is a canonical character for a hyperbolic orbifold obtained from $M$ by Dehn filling a (possibly empty) collection of cusps corresponding to components of $L'$ then $C$ is smooth at $\chi$ and $C(\chi)\in\mathbb{C}$ represents the cusp parameter for the $K$-cusp of the filled orbifold.
\end{prop}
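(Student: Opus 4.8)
The plan is to exhibit $C$ explicitly as a ratio of distinguished trace functions and to evaluate it by a single matrix identity. Fix a discrete faithful $\rho_0$ with $\chi_0=\chi_{\rho_0}$ and conjugate in $\text{SL}_2(\C)$ so that the $K$-peripheral group $\langle\mu,\lambda\rangle$ fixes $\infty$; then $\rho_0(\mu)=\pm\left(\begin{smallmatrix}1 & a\\ 0 & 1\end{smallmatrix}\right)$ and $\rho_0(\lambda)=\pm\left(\begin{smallmatrix}1 & b\\ 0 & 1\end{smallmatrix}\right)$ with $a\neq 0$, and the cusp parameter of the $K$-cusp relative to $(\mu,\lambda)$ is $b/a$. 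Since $\rho_0(\pi_1 M)$ is a non-elementary Kleinian group it fixes no point of $\bound\H^3$, so I may pick $g\in\pi_1 M$ with $\rho_0(g)\,\infty\neq\infty$, that is, with lower-left entry $c\neq 0$.

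I would then set
\[ C=\frac{I_{\lambda^2 g}-I_g}{I_{\mu^2 g}-I_g}\in\C(X_0). \]
Squaring the peripheral elements removes the sign ambiguity of the parabolic lifts, since a parabolic fixing $\infty$ squares to $\left(\begin{smallmatrix}1 & 2a\\ 0 & 1\end{smallmatrix}\right)$ regardless of sign. The elementary identity $\text{Tr}\!\left(\left(\begin{smallmatrix}1 & p\\ 0 & 1\end{smallmatrix}\right)h\right)=\text{Tr}\,h+pc$, valid for any $h$ with lower-left entry $c$, then gives $I_{\mu^2 g}-I_g=2ac$ and $I_{\lambda^2 g}-I_g=2bc$ at $\chi_0$. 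In particular the denominator equals $2ac\neq 0$ there, so $C$ is not identically zero on the irreducible variety $X_0$ and genuinely lies in $\C(X_0)$. Being a ratio of regular functions, it is smooth wherever the regular function $I_{\mu^2 g}-I_g$ is nonzero, hence on any neighborhood $U_0$ of $\chi_0$ on which that function is nowhere zero; such a $U_0$ exists because it is nonzero at $\chi_0$. This settles the smoothness claim.

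For the value, let $\chi\in U_0$ be a canonical character of a hyperbolic orbifold obtained by filling cusps corresponding to $L'$. Completeness of the $K$-cusp forces the associated irreducible representation to send $\mu,\lambda$ to parabolics with a common fixed point; conjugating it to $\infty$, the translation part $a'$ of $\mu$ is nonzero, and since $I_{\mu^2 g}-I_g=2a'c'\neq 0$ on $U_0$ we automatically get $c'\neq 0$. Rerunning the computation yields $C(\chi)=2b'c'/2a'c'=b'/a'$, the relative complex modulus, which represents the cusp parameter; irreducibility guarantees the character determines the representation up to conjugacy, so the value is unambiguous. The functions $I_{\mu^2 g},I_{\lambda^2 g},I_g$ are coordinate-ring elements of $X(\pi_1 M)$ restricting to $X_0$, and since $X(\pi_1 M')\subset X(\pi_1 M)$ for each relevant filling $M'$, the evaluation makes sense and the identity applies verbatim. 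The one point needing care—the main obstacle, such as it is—is precisely this verification that the matrix identity may be applied on the nose at every such $\chi$: that $\mu,\lambda$ are honestly parabolic with a shared fixed point there (completeness of the $K$-cusp), and that $g$ still moves that point ($c'\neq 0$, forced by $\chi\in U_0$). Once these are in hand the proposition is immediate.
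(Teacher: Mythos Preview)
Your proof is correct and follows the same strategy as the paper: pick an auxiliary element whose image does not fix the parabolic fixed point of the $K$-peripheral group, and use trace functions to extract the ratio of translation lengths. The implementations differ only in the explicit formula. The paper takes $\mu' = \gamma\mu^{-1}\gamma^{-1}$ (a conjugate meridian, hence automatically parabolic at every canonical character) and sets
\[
C = \frac{2I_{\lambda\mu'} - I_\lambda I_{\mu'}}{2I_{\mu\mu'} - I_\mu I_{\mu'}},
\]
where the combination $2I_{\alpha\beta}-I_\alpha I_\beta$ cancels the $\pm 1$ sign ambiguity of the parabolic lifts; its open set is $U_0=\{I_{[\mu,\mu']}\neq 2\}$, which is exactly the locus where $\rho(\mu)$ and $\rho(\mu')$ have distinct fixed points. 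You instead square $\mu$ and $\lambda$ to kill the signs and use an arbitrary $g$, obtaining the simpler formula $C=(I_{\lambda^2 g}-I_g)/(I_{\mu^2 g}-I_g)$ with $U_0$ the nonvanishing locus of the denominator. Your version is a bit more elementary and avoids the need for $\mu'$ to be parabolic; the paper's version has the minor advantage that its $U_0$ admits the clean intrinsic description ``$\langle\rho(\mu),\rho(\mu')\rangle$ is irreducible.'' Either way the underlying matrix identity $\mathrm{Tr}\!\left(\left(\begin{smallmatrix}1&p\\0&1\end{smallmatrix}\right)h\right)=\mathrm{Tr}\,h+pc$ does all the work.
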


\proof
Take $\mu, \gamma \in \pi_1 M$ so that $\mu$ is a meridian which encircles the component $K$ of $L$ and  $\gamma$ does not commute with $\mu$.  Define $\mu' = \gamma\mu^{-1}\gamma^{-1}$ and \[U_0 = \{\chi\,|\,I_{[\mu,\mu']}(\chi)\ne 2\}\subset X_0.\]  Suppose that $\rho \in t^{-1}(X_0)$ is a representation such that $\rho(\mu)$ is non-trivial and parabolic.  A brief computation shows that its character is in $U_0$ if and only if $\rho(\mu)$ and $\rho(\mu')$ do not commute.

If $\rho_0 \in R(\pi_1 M)$ has character $\chi_0$ then $\rho_0(\mu)$ is a non-trivial parabolic, and we claim that $\rho_0(\gamma)$ does not share its fixed point at infinity.  If $\rho_0(\gamma)$ is loxodromic, this is a standard consequence of J\o rgensen's inequality. If $\rho_0(\gamma)$ is parabolic, the claim follows from the fact that $\rho_0$ is faithful and parabolic elements of $\mathrm{SL}_2(\mathbb{C})$ commute if and only if they share fixed points at infinity. It follows from the claim that $\rho_0(\mu)$ and $\rho_0(\mu')$ do not share a fixed point in $\bound \H^3$ and hence, do not commute.  Therefore $\chi_0 \in U_0$.

Now, fix a primitive, peripheral element $\lambda \in \pi_1 M$ such that $\langle \lambda, \mu \rangle \cong \Z \times \Z$ and define\begin{align}\label{see?}
	C = \frac{2 I_{\lambda \mu'} - I_\lambda I_{\mu'}}{2 I_{\mu \mu'} - I_\mu I_{\mu'}} \in \C(X_0). \end{align}
Suppose that $M_0$ is a hyperbolic orbifold obtained from $M$ by hyperbolic Dehn filling on a collection of components of $L'$.  We claim that, if $\chi \in U_0$ is a canonical character for $M_0$, then $C(\chi)$ is the cusp parameter of $M_0$.  

Suppose $\chi$ is one such character and $\rho \in t^{-1}(\chi)$.  Since $I_{[\mu,\mu']}(\chi)\ne 2$, the fixed points for $\rho(\mu)$ and $\rho(\mu')$ are distinct.  By composing with an inner automorphism, we may assume that $\rho(\mu)$ fixes $\infty$ and $\rho(\mu')$ fixes $0$.   This forces $\rho(\mu)$ and $\rho(\mu')$ to be upper and lower triangular, respectively.  Finally, we can conjugate $\rho$ by a diagonal matrix in $\text{SL}_2 (\C)$ to assume that the lower left entry of $\rho(\mu')$ is one.  We have now found $\rho \in t^{-1}(\chi)$ such that
\begin{align*}
\rho(\mu) &= \begin{pmatrix} \epsilon & a \\ 0 & \epsilon \end{pmatrix} & \rho(\mu') &= \begin{pmatrix} \eta & 0 \\ 1 & \eta \end{pmatrix} 
\end{align*}
where $\epsilon, \eta \in \{ \pm 1\}$ and $a \neq 0$. Since $\mu$ and $\lambda$ commute, we must also have
\[ \rho(\lambda) = \begin{pmatrix} \kappa & b \\ 0 & \kappa \end{pmatrix} \]
where $\kappa \in \{ \pm 1 \}$.  It follows that the number $b/a$ represents the cusp parameter for the $K$-cusp of $M_0$.

To prove the claim, notice that $C$ takes the finite value $b/a$ when evaluated at $\chi$. Since the denominator $2I_{\mu\mu'} - I_{\mu}I_{\mu'}$ is regular on $X_0$ and non-zero at $\chi$, $C$ is smooth at $\chi$.
\endproof

\begin{cor}\label{TwoCpt}\TwoCptCor\end{cor}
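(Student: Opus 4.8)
The plan is to combine Lemma \ref{lem: finite image} with Proposition \ref{regfun} to establish both conclusions. The hypothesis gives infinitely many filled orbifolds $M_j$, obtained by Dehn filling the $K'$-cusp, each covering an orbifold $O_j$ with a rigid cusp. These $M_j$ are one-cusped (the surviving $K$-cusp), and their volumes are bounded above by $\mathrm{vol}(S^3-L)$ by the standard fact that volume strictly decreases under hyperbolic Dehn filling. Hence Lemma \ref{lem: finite image} applies to the family $\{M_j\}$ and tells us that only finitely many distinct cusp shapes occur among the $K$-cusps of the $M_j$.

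For conclusion (1), I would extract the complete structure on $S^3-L$ as a limit. By the Hyperbolic Dehn Surgery Theorem, after passing to a subsequence with filling coefficients approaching $\infty$, the canonical characters $\chi_j$ for the $M_j$ converge to a canonical character $\chi_0$ for $M=S^3-L$. Proposition \ref{regfun} supplies the cusp-parameter function $C\in\C(X_0)$ on the canonical component $X_0$, smooth near $\chi_0$, with $C(\chi_j)$ recording the $K$-cusp parameter of $M_j$ and $C(\chi_0)$ recording the $K$-cusp parameter of the unfilled link complement. Since the $M_j$ realize only finitely many cusp shapes and infinitely many of them cover rigid-cusped orbifolds, after passing to a further subsequence we may assume all the $\chi_j$ have the same rigid $K$-cusp shape; by continuity of $C$ at $\chi_0$ the value $C(\chi_0)$ is the limit of the $C(\chi_j)$, and since the rigid cusp shapes form a discrete set (each is a specific point in the modular orbifold) the limiting shape is itself that same rigid shape. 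The $K$-cusp of $S^3-L$ is then a Euclidean torus covering the rigid Euclidean orbifold underlying that cusp shape, giving (1).

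For conclusion (2), the argument is that the $K$-cusp parameter does not vary under the fillings of the $K'$-cusp. The finitely-many-shapes conclusion of Lemma \ref{lem: finite image}, applied to the infinite family, means $C$ takes only finitely many values on the sequence $\{\chi_j\}$; passing to a subsequence, $C$ is \emph{constant} along an infinite set of canonical characters $\chi_j$ that accumulate at $\chi_0$. These $\chi_j$ lie on the curve in $X_0$ consisting of hyperbolic structures where the $K$-cusp remains complete (the one-dimensional slice cut out by holding the $K$-cusp unfilled, which is a curve since $S^3-L$ has two cusps and $X_0$ has dimension two). A regular function on an irreducible curve that is constant on an infinite subset is constant on the whole curve. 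Hence $C$ is constant on this curve, which by the equivalence recalled in the introduction (the $K$-cusp shape being constant as the $K'$-cusp is filled) is exactly the statement that the $K$-cusp is geometrically isolated from the $K'$-cusp.

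The main obstacle I anticipate is making the passage from ``finitely many cusp shapes'' to ``$C$ constant on a curve'' fully rigorous, in particular controlling the geometry of the relevant subvariety. One must verify that the characters $\chi_j$ genuinely lie on a single irreducible one-dimensional component of the locus where the $K$-cusp stays complete, so that constancy on an infinite accumulating subset forces global constancy; this requires knowing that the $K$-cusp-complete structures form a curve through $\chi_0$ inside the two-dimensional canonical component, which follows from Theorem 4.5.1 of \cite{Sh} together with the observation that imposing the single condition $I_{\lambda_K}^2 = 4$ (parabolicity at the $K$-cusp) cuts $X_0$ down to dimension one near $\chi_0$. The bookkeeping of subsequences and the discreteness of rigid shapes in the modular orbifold is routine, but this dimension-and-irreducibility point is where the real content lies, and it is the step I would write out most carefully.
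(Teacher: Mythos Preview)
Your proposal is correct and follows essentially the same approach as the paper's proof: bound volumes, apply Lemma \ref{lem: finite image} to reduce to finitely many shapes, use the Hyperbolic Dehn Surgery Theorem to get $\chi_j\to\chi_0$, exploit continuity of $C$ and discreteness of the relevant $\mathrm{PSL}_2(\Z)$-orbit for (1), and then argue that $C$ is constant on the curve of $K$-cusp-complete characters for (2). The only cosmetic differences are that the paper cuts out the curve using $I_\mu$ rather than $I_{\lambda_K}$ and invokes smoothness of the canonical character (via \cite[Th.~B.1.2]{BP}) to ensure the tail of $\{\chi_j\}$ lands in a single irreducible component, which is precisely the ``dimension-and-irreducibility'' point you flagged as the main obstacle.
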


\proof Let $\{M_j\}_1^\infty$ be a set of hyperbolic orbifolds obtained by Dehn fillings on the $K'$-component of $M=S^3-L$ as stated in the assumptions of the corollary.  Let $\{ \chi_j\}_1^\infty$ be a sequence of characters in $X(\pi_1 M)$ as given in (2) of the Hyperbolic Dehn Surgery Theorem above.  In particular, $\chi_j$ is a canonical character for $M_j$ and the sequence converges in $X(\pi_1 M)$ to a canonical character $\chi_\infty$ for $M$.  

By Theorem B.1.2 \cite{BP}, $\chi_\infty$ is a smooth point of $X(\pi_1 M)$ and so lies in a unique algebraic component $X$ of $X(\pi_1)$.  It follows that there is an integer $N$ such that $\{ \chi_j \}_N^\infty \subset X$.

Let $C \in \C(X)$ be the cusp parameter function for the $K$-cusp of $M$ given by Proposition \ref{regfun}.  Since the volume of each $M_j$ is bounded above by the volume of $M$ (see \cite{Th_notes} Theorem 6.5.6.), Lemma \ref{lem: finite image} shows that we may pass to a subsequence of $\{ M_j \}$ to assume that they all have the same cusp shape, which covers a rigid Euclidean orbifold.  Hence the numbers $C(\chi_j) \in \C - \mathbb{R}$ all lie in a single $\text{PSL}_2 (\Z)$ orbit $O$.  Since $C$ is continuous on $X$, $C(\chi_j) \to C(\chi_\infty)$.  Moreover, $O$ is discrete so $C(\chi_j)=C(\chi_\infty)$ for large enough values of $j$.  Since cusp parameters determine the shapes of cusps, the shape of the $K$-cusp of $M$ covers a rigid Euclidean orbifold.

To see that the $K$-cusp of $M$ is geometrically isolated from the $K'$-cusp, we first take $\mu \in \pi_1 M$ to be a meridian encircling $K$.  Recall that $\dim X=2$ and $I_\mu$ is non-constant on $X$.  It follows that there is an algebraic curve $D \subset X$ which contains every $\chi_j$ for large enough $j$.  Since $\chi_j \to \chi_\infty$ in $D$, we conclude that the cusp parameter function $C$ is constant on $D$ which confirms the isolation.  
\endproof

\begin{remark} As noted in the introduction, here ``geometric isolation'' of the $K$-cusp means that at most finitely many fillings of the $K'$-cusp change its shape. The reason for this is that the curve $D$ above may not contain the character of \textit{every} hyperbolic Dehn filling of the $K'$-cusp, though it does contain all but finitely many.\end{remark}

Corollary \ref{TwoCpt} applies if infinitely many knots with hidden symmetries can be obtained by Dehn fillings of $M$ along $K'$, by Proposition 9.1 of \cite{NeumReid}.  It is also worth noting that its first condition holds more generally. 

\begin{cor}\label{NCpt} 
Suppose that $L=K \sqcup L'$ is a hyperbolic link in $S^3$ where $K$ is a knot and $L' $ has $k\geq 1$ components.  If there is a sequence of  hyperbolic Dehn fillings on $L'$ with coefficients in $\mathbb{Z}^{2k}$ approaching $(\infty,\hdots,\infty)$, each of which covers an orbifold with a single rigid cusp, then the shape of the $K$-cusp of $S^3-L$ covers a rigid Euclidean orbifold.
\end{cor}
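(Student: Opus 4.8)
The plan is to run the proof of Corollary~\ref{TwoCpt}(1) essentially verbatim, the point being that every ingredient used there was already set up to accommodate an arbitrary number of filled cusps. Let $\{M_j\}$ denote the orbifolds produced by the given sequence of fillings on $L'$. Because all $k$ components of $L'$ are filled, each $M_j$ is a one-cusped hyperbolic orbifold whose only cusp is the image of the $K$-cusp of $M = S^3 - L$, and by hypothesis each $M_j$ covers an orbifold with a single rigid cusp.

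First I would invoke the Hyperbolic Dehn Surgery Theorem, whose statement already permits the filling coefficients to live in $\mathbb{Z}^{2k}$ and approach $(\infty,\dots,\infty)$: it supplies, for large $j$, canonical characters $\chi_j \in X(\pi_1 M)$ for $M_j$ converging to a canonical character $\chi_\infty$ of $M$. Next, Theorem~B.1.2 of \cite{BP} shows that $\chi_\infty$ is a smooth point of $X(\pi_1 M)$, so it lies in a unique algebraic component $X$; since $\chi_\infty$ is canonical for $M$, this $X$ is a canonical component, and smoothness forces $\chi_j \in X$ for all large $j$.

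The key step is to bring in Proposition~\ref{regfun}, which is stated precisely for a link $L = K \sqcup L'$ with $L' \neq \emptyset$ and produces a rational function $C \in \C(X)$ recording the $K$-cusp parameter at canonical characters of orbifolds obtained by filling \emph{any} collection of $L'$-cusps --- in particular all of them. Thus $C(\chi_j)$ is the cusp parameter of the $K$-cusp of $M_j$. Since $\mathrm{vol}\,M_j \leq \mathrm{vol}\,M$ (Theorem~6.5.6 of \cite{Th_notes}) and each $M_j$ covers an orbifold with a rigid cusp, Lemma~\ref{lem: finite image} bounds the number of cusp shapes occurring among the $M_j$; passing to a subsequence, all $C(\chi_j)$ then lie in a single $\mathrm{PSL}_2(\Z)$-orbit $O$ representing a shape that covers a rigid Euclidean orbifold. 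Continuity of $C$ along $X$ gives $C(\chi_j) \to C(\chi_\infty)$, and discreteness of $O$ forces $C(\chi_j) = C(\chi_\infty)$ for large $j$, placing the $K$-cusp parameter of $M$ in $O$. Hence the $K$-cusp of $M$ covers a rigid Euclidean orbifold.

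I do not expect a genuine obstacle here: the only thing to verify is that passing from one filled cusp to $k$ filled cusps changes nothing in the argument. Concretely, both the Hyperbolic Dehn Surgery Theorem and Proposition~\ref{regfun} are already phrased for arbitrarily many cusps, and the dimension of $X$ (now $k+1$ rather than $2$) plays no role in part~(1) --- it was needed only to extract the curve $D$ in the geometric-isolation half of Corollary~\ref{TwoCpt}, which is exactly why that second conclusion is omitted here. The one point deserving a moment's care is the convergence step: one must confirm that the $\chi_j$ approach $\chi_\infty$ within the single component $X$, so that continuity of the rational function $C$ legitimately applies, and this is what smoothness of $\chi_\infty$ guarantees.
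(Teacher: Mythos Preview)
Your proposal is correct and matches the paper's own proof, which simply says ``The proof is essentially that of Corollary~\ref{TwoCpt}, less its final paragraph.'' You have correctly identified both that every ingredient (the Hyperbolic Dehn Surgery Theorem, Proposition~\ref{regfun}, Lemma~\ref{lem: finite image}) was already stated for arbitrarily many filled cusps, and that the curve-dimension argument used for conclusion~(2) is precisely what fails to generalize and is therefore omitted.
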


\proof The proof is essentially that of Corollary \ref{TwoCpt}, less its final paragraph.\endproof

\subsection{From the perspective of the deformation variety}\label{defvar} In this section we will take $M$ to be a complete, oriented hyperbolic $3$-manifold of finite volume equipped with an \textit{ideal triangulation}: a decomposition into ideal tetrahedra, each properly immersed in $M$ so that its interior is embedded and its intersection with any other pulls back to a union of proper faces.  Unfortunately, we do not know the exact class of hyperbolic $3$-manifolds with an ideal triangulation, but it is certainly quite large.  For instance every complete, finite-volume hyperbolic $3$-manifold has a finite-degree cover with this property \cite{LuoSchTill}.

The union of the incomplete and complete hyperbolic structures on $M$ are parametrized by a \emph{deformation variety} $\mathcal{D}_0$. This complex affine algebraic set was first described in Thurston's notes \cite[Ch.~4]{Th_notes}. Neumann--Zagier subsequently used $\mathcal{D}_0$ to obtain an elegant estimate for volume changes of hyperbolic three-manifolds after Dehn surgery \cite{NZ}. 

The definition of $\mathcal{D}_0$ rests on the fact that oriented hyperbolic ideal tetrahedra are parametrized by complex numbers in the upper half-plane up to a small ambiguity. If $\Delta$ is an ideal hyperbolic tetrahedron with a specified edge $e$, it may be isometrically positioned in the  Poincar\'e half-space model for $\H^3$ so that $e$ coincides with the geodesic from $0$ to $\infty$ and the other two ideal vertices coincide with $1$ and a complex number $z$ whose imaginary part is positive. This is because the orientation-preserving isometry group $\mathrm{PSL}(2,\mathbb{C})$ of $\mathbb{H}^3$ acts triply transitively on the extended complex plane $\widehat{\mathbb{C}}$. The number $z$ is called the {\it edge parameter} for $e$. 

The edge parameter is a complete invariant of the isometry class of the pair $(\Delta,e)$. In particular, $z$ does not depend on an orientation of $e$ because the M\"obius transformation $\left(\begin{smallmatrix} 0 & i\sqrt{z} \\ i/\sqrt{z} & 0 \end{smallmatrix}\right)$ which exchanges $0$ and $\infty$ and takes $z$ to $1$ also takes $1$ to $z$. The other edge parameters for $\Delta$ are given by the functions
\begin{align}\label{edge fun}
\zeta_1(z) &= \frac{1}{1-z} & \zeta_2(z) &= \frac{z-1}{z}
\end{align}
as indicated in Figure \ref{fig: shapes}. See eg.~\cite[\S 4.1]{Th_notes}.

\begin{figure}[h] 
   \centering
   \includegraphics[width=1.7in]{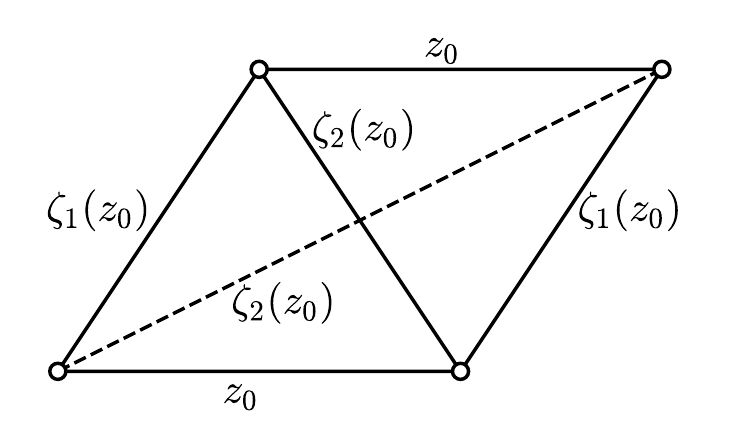} 
   \caption{The shape parameters on a hyperbolic tetrahedron follow a right-hand rule with $\zeta_1$ and $\zeta_2$.}
   \label{fig: shapes}
\end{figure}

Suppose that $\Delta_1,\hdots,\Delta_n$ are the ideal tetrahedra in our ideal triangulation of $M$.  For each $\Delta_i$, fix an edge and assign it an indeterminate parameter $z_i$. For a given (quotient) edge $e$ in the triangulation of $M$, consider the set of all edge parameters of all edges of $\Delta_i$ which coincide with $e$ in $M$.  When we set the product of all such parameters equal to 1 (using every possible $i$), we obtain an equation which is commonly referred to as the \textit{edge equation} for $e$.  If we clear denominators in each edge equation for $M$, we obtain a system of polynomial equations in $n$ variables.  The deformation variety $\mathcal{D}_0 \subset \mathbb{C}^n$ is defined to be the set of solutions to this system of polynomial equations.

For any hyperbolic structure on $M$, the $n$-tuple of edge parameters that records the isometry class of each $\Delta_i$ in $M$ satisfies every edge equation, as these record that the cycle of tetrahedra determined by arranging copies of the $\Delta_i$ around the corresponding edge closes up (see \cite[Fig.~2]{NZ}).  

Most solutions to the edge equations correspond to incomplete hyperbolic structures on $M$.  Indeed, by Mostow--Prasad rigidity, the complete hyperbolic structure on $M$ is unique up to isometry and hence corresponds to a finite, non-empty subset of $\mathcal{D}_0$. (Beware that, if $M$ has automorphisms that do not preserve the triangulation, there may be more than one point of $\mathcal{D}_0$ corresponding to the complete structure.)  Near a point $\mathbf{z}^0\in\mathcal{D}_0$ corresponding to the complete structure, the completeness of the metric at a cusp $c$ of $M$ is recorded by a function $u\co\mathcal{D}_0\to\mathbb{C}^*$ that is analytic around $\mathbf{z}^0$.

The definition of $u$ depends on a function $\mu$ defined in \cite{NZ} in terms of a simplicial path in a cross-section $T$ of $c$.  Here, we will always choose cusp cross-sections so that they intersect any simplex $\sigma$ in the triangulation of $M$ in a collection of triangles each of which bounds a region in $\sigma$ which contains a single ideal vertex.  These cross-sections inherit a triangulation from that of $M$.  

Given a closed oriented simplicial curve $\fg$ in $T$, the function $\mu(\fg)$ on $\mathcal{D}_0$ is defined as follows. Fix a vertex $v_0 \in \fg$ and let $\{ v_0, \ldots, v_{m-1}, v_0\}$ be the ordered sequence of vertices encountered along $\fg$. For each $j$, consider the the tetrahedra which intersect $T$ in triangles that contain a particular vertex $v_j$ and that lie on the righthand side of $\fg$.  Let $w_j \in \C^\ast$ be the product of the edge parameters of the edges of these tetrahedra which pass through $v_j$.  Regarding each $w_j$ as a function of $\bz \in \mathcal{D}_0$, given such $\bz$ we define 
\begin{align}\label{mu fun}
  \mu(\fg)(\bz) = (-1)^m \prod_{j=0}^{m-1} w_{j}. 
\end{align}
See also the definition above Lemma 2.1 in \cite{NZ}.

For any fixed $\fg$, $\mu(\fg)(\cdot) \co \mathcal{D}_0 \to \C$ is clearly a rational function. Lemma 2.1 of \cite{NZ} shows that for any fixed $\bz\in\mathcal{D}_0$, $\mu(\cdot)(\bz) \co \pi_1T_i \to \C^\ast$ is a well-defined homomorphism.  In \cite[\S 4]{NZ}, $\mu(\fg)(\cdot)$ is interpreted as the derivative of the holonomy of $\fg$ under the representation $\pi_1 M\to\mathrm{PSL}(2,\mathbb{C})$ associated to $\bz$. More precisely, any such representation takes any representative of $\pi_1 T_i$ in $\pi_1 M$ to an abelian subgroup of $\mathrm{PSL}(2,\mathbb{C})$ whose elements have a common fixed point $p\in\widehat{\C}$. Upon conjugating so that $p=\infty$, each element $\fg$ acts as $z\mapsto \mu z + \tau$ for some fixed $\mu,\tau\in \mathbb{C}$.  The function $\mu(\fg)$ from (\ref{mu fun}) records this $\mu$.

For each cusp $c_i$ of $M$, fix a simplicial curve $\fg_i$ on a cross-section $T_i$ which represents a element of $\pi_1 T_i$.  Define $u_i(\bz) =\log(\mu(\fg_i)(\bz))$, where the branch of the logarithm is chosen to take the value $0$ at $1$.  See also Equation (28) of \cite{NZ}.  The functions $u_i$ constitute the eponymous {\it good parameters for deformation} of \cite[\S 4]{NZ}. From our perspective, the following is fundamental:

\begin{quote}\it There is a neighborhood $U$ of $\bz^0$ in $\mathcal{D}_0$ such that for all $\bz\in U$, the cusp $c_i$ is complete in the hyperbolic structure determined by $\bz$ if and only if $u_i(\bz) = 0$, i.e. $\mu(\fg_i)(\bz) = 1$.\end{quote}

Note that, since $\mu(\fg_i)$ is a non-constant rational function on $\mathcal{D}_0$, we know that the geometric structures for which $c_i$ is complete lie on a codimension-$1$ subvariety.  

\begin{fact}\label{cross rat} For a Euclidean triangle $\Delta$ in $\mathbb{C}$ with vertices $a$, $b$, $c$ labeled so that $(b-a,c-a)$ is positively oriented as a basis of $\mathbb{R}^2$,
\[ z = \frac{c-a}{b-a} \]
is an orientation-preserving similarity invariant of the pair $(\Delta,a)$. If such a similarity takes $a$ to $0$ and $b$ to $1$ then it takes $c$ to $z$.\end{fact}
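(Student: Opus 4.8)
The plan is to reduce everything to the explicit form of an orientation-preserving similarity of the plane. First I would recall that any orientation-preserving similarity of $\C$ is a map $f\co w \mapsto \eta w + \tau$ with $\eta \in \C^\ast$ and $\tau \in \C$, where $\eta$ records the rotation-and-scaling part and $\tau$ the translation. Applying $f$ to the three vertices and forming the ratio, both $\tau$ (by subtraction) and $\eta$ (by cancellation) drop out:
\[ \frac{f(c)-f(a)}{f(b)-f(a)} = \frac{\eta(c-a)}{\eta(b-a)} = \frac{c-a}{b-a}, \]
so $z$ is unchanged. This is the entire content of the invariance assertion, modulo confirming that $z$ is genuinely a function of the pair $(\Delta,a)$ together with the orientation of $\R^2$.

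That well-definedness is the one point needing a little care, and I would address it next. The pair $(\Delta,a)$ names the distinguished vertex but not which of the remaining two is $b$ and which is $c$. I would observe that the requirement that $(b-a,c-a)$ be a positively oriented basis selects exactly one of the two orderings: for a nondegenerate triangle the vectors $b-a$ and $c-a$ are linearly independent, so exactly one ordering has positive determinant. I would then check that this convention is compatible with the equivalence, since multiplication by $\eta$ is itself orientation-preserving, and hence $(\eta(b-a),\eta(c-a))$ is positively oriented whenever $(b-a,c-a)$ is; thus $f$ carries the chosen labeling for $(\Delta,a)$ to the chosen labeling for $(f(\Delta),f(a))$. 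It is worth recording here the identity $\mathrm{Im}\big(\overline{(b-a)}\,(c-a)\big) = |b-a|^2\,\mathrm{Im}(z)$, whose left side is the determinant $\det[\,b-a \mid c-a\,]$; this shows the orientation hypothesis is equivalent to $\mathrm{Im}(z) > 0$, so that $z$ lands in the upper half-plane exactly as the edge parameters do.

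Finally, for the normalization claim I would solve for the similarity directly. Writing $f(w) = \eta w + \tau$ and imposing $f(a)=0$ and $f(b)=1$ gives $\tau = -\eta a$ and $\eta(b-a)=1$, hence $\eta = 1/(b-a)$ and $f(w) = (w-a)/(b-a)$; evaluating at $c$ yields $f(c) = (c-a)/(b-a) = z$. I do not anticipate any genuine obstacle here: the argument is pure bookkeeping about affine maps of $\C$, and the only step deserving attention is the orientation discussion in the middle paragraph, which is what guarantees that the labeling convention makes $z$ well defined and fixes its sign.
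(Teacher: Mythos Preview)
Your argument is correct. The paper, however, does not actually supply a proof of this Fact: it is stated as self-evident (a ``Fact'' in the authors' terminology) and immediately used as input to the proof of Proposition~\ref{ratfun}. The surrounding text only remarks that, interpreting $\Delta$ as the projection of an ideal tetrahedron with a vertex at $\infty$, the quantity $z$ coincides with the edge parameter. So you have written out in full an elementary verification that the paper leaves implicit; your treatment of the orientation/well-definedness issue and the identity $\mathrm{Im}\!\big(\overline{(b-a)}(c-a)\big)=|b-a|^2\,\mathrm{Im}(z)$ goes beyond what the paper records, but is consistent with its conventions.
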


Taking $\Delta$ to be the vertical projection to $\mathbb{C}$ of an ideal tetrahedron with vertices at $a$, $b$, $c$ and $\infty$, we recognize $z = (c-a)/(b-a)$ as the parameter of the edge $e$ with endpoints at $a$ and $\infty$.  This fundamental observation is key to our proof of the deformation variety analogue of Proposition \ref{regfun}.

\begin{prop}\label{ratfun}
Suppose that $M$ is a complete oriented hyperbolic 3-manifold with an ideal triangulation decorated by the choice of an edge of each simplex, and $T$ is a cross section of a cusp of $M$. Fix a vertex $v_0$ and edge $f$ of the triangulation of $T$ such that $v_0\in f$.   Given a closed oriented simplicial curve $\fg$ in $T$ containing $v_0$, there is a rational function $\tau(\fg)$ on $\mathcal{D}_0$ which depends on $f$ and has the following properties.
\begin{enumerate}
\item $\tau(\fg)$ is analytic near $\bz^0$.
\item If $\fg$ is not null homotopic then $\tau(\fg)(\bz^0)\neq 0$.
\item If $\fm$ and $\fl$, both containing $v_0$, represent a generating pair for $H_1(T)$ then on a neighborhood $U$ of $\bz^0$,  for all $\bz\in U$ such that $\mu(\fm)(\bz)=1$ the value of $\frac{\tau(\fl)}{\tau(\fm)}$ at $\bz$ represents the cusp parameter of the complete geometrized cusp.
\end{enumerate}
\end{prop}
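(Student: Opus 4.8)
The plan is to produce $\tau(\fg)$ as the developed position of the terminal vertex of $\fg$ under the developing map of the triangulated cross-section $T$, normalized by means of $f$, and then to extract all three properties from the identification of this quantity with the translation part of the holonomy of $\fg$.

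First I would pin down the developing map of $T$ associated to a point $\bz\in\mathcal{D}_0$ by declaring that $v_0\mapsto 0$ and that the other endpoint of $f$ maps to $1$; this uses up the ambiguity $z\mapsto \eta z+c$ in the similarity structure on $\mathbb{C}$. Tracing $\fg$ through its sequence of triangles and developing one triangle at a time, Fact \ref{cross rat} computes the position of each newly encountered vertex from the two already-placed ones by $p_c=p_a+z(p_b-p_a)$, where $z$ is the edge parameter at the appropriate corner, namely one of $z_i$, $\zeta_1(z_i)$, or $\zeta_2(z_i)$. Composing these finitely many affine steps expresses the terminal position, which I take to be $\tau(\fg)$, as a rational function of $z_1,\dots,z_n$; hence $\tau(\fg)\in\C(\mathcal{D}_0)$, and its only dependence on $f$ is through the overall normalizing scale. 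By construction $\tau(\fg)(\bz)$ is the image of $0$ under the holonomy $z\mapsto \mu(\fg)(\bz)\,z+\tau(\fg)(\bz)$ of $\fg$, i.e.\ its translation part; this is consistent with the multiplicativity of $\mu(\fg)$ from \cite{NZ} via the cocycle relation $\tau(\fg_1\fg_2)=\tau(\fg_1)+\mu(\fg_1)\tau(\fg_2)$.

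Properties (1) and (2) then follow quickly. The only denominators produced along the way are the factors $1-z_i$ and $z_i$ coming from $\zeta_1$ and $\zeta_2$; since the geometric edge parameters at the complete structure lie in the open upper half-plane, none of these vanish at $\bz^0$, so $\tau(\fg)$ is analytic there, giving (1). For (2), at $\bz^0$ the holonomy is the complete, discrete and faithful one, under which $h_\fg$ is translation by $\tau(\fg)(\bz^0)$; if $\fg$ is not null homotopic then its class in $\pi_1T$ is nonzero, so this translation is nontrivial and $\tau(\fg)(\bz^0)\neq0$.

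The crux is (3), where the main point is that the single equation $\mu(\fm)(\bz)=1$ already forces the whole cusp to be complete near $\bz^0$. Indeed, for every $\bz$ the image of $\pi_1 T$ is abelian with a common fixed point in $\widehat{\C}$, so $h_\fm$ and $h_\fl$ commute. Shrinking $U$ so that $\tau(\fm)\neq0$ on it (possible by (2)), when $\mu(\fm)(\bz)=1$ the element $h_\fm$ is a nontrivial parabolic, and anything in $\mathrm{PSL}(2,\C)$ commuting with a nontrivial parabolic is itself parabolic or trivial; hence $h_\fl$ is parabolic, i.e.\ $\mu(\fl)(\bz)=1$. Thus $h_\fm$ and $h_\fl$ are the translations $z\mapsto z+\tau(\fm)(\bz)$ and $z\mapsto z+\tau(\fl)(\bz)$, the geometrized cusp is the Euclidean torus $\C/\langle\tau(\fm)(\bz),\tau(\fl)(\bz)\rangle$, and its complex modulus relative to the ordered generators $(\fm,\fl)$ is exactly $\tau(\fl)(\bz)/\tau(\fm)(\bz)$, which therefore represents the cusp parameter. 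I expect the principal obstacle to be bookkeeping rather than conceptual: specifying the combinatorial rule that attaches the correct edge parameter and orientation to each development step, so that $\tau(\fg)$ really does equal $h_\fg(0)$ globally on $\mathcal{D}_0$ and meshes with the definition of $\mu(\fg)$ in (\ref{mu fun}).
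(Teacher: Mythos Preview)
Your proposal is correct and follows essentially the same approach as the paper: define $\tau(\fg)$ as the developed endpoint of (a lift of) $\fg$ in the normalized picture, identify it with the translation part of the holonomy, and read off (1)--(3) from standard facts about the complete structure and the centralizer of a parabolic. The paper differs only in presentation---it gives an explicit closed formula $\tau(\fg)(\bz)=\sum_{l=0}^{m-1}(-1)^l\prod_{j=0}^l w_j$ in terms of products of edge parameters along the right side of $\fg$, and for (3) it compares the translation parts of $\rho_\bz(\fm\gamma)$ and $\rho_\bz(\gamma\fm)$ directly rather than invoking the centralizer statement---but these are the same computations you sketch, and your acknowledged ``bookkeeping'' is exactly what the paper's explicit formula resolves.
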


\begin{dfn}\label{cusp param} For $\fm$ and $\fl$ as in condition (3) above, we call $\tau_c = \frac{\tau(\fl)}{\tau(\fm)}$ a \textit{cusp parameter function} for $c$. \end{dfn} 

\begin{proof}  
The definition of $\tau$, given in (\ref{tau}) below, is very similar to that of $\mu$.

For a closed oriented simplicial curve $\fg$ in $T$ containing $v_0$, let $\{ v_0, \ldots, v_{m-1}, v_0\}$ be the ordered sequence of vertices encountered along $\fg$. Consider the tetrahedra which intersect $T$ in triangles that contain a vertex of $\fg$ and lie on the righthand side of $\fg$.  For $j \neq 0$, let $w_j \in \C^\ast$ be the product of the edge parameters of the edges of these tetrahedra which pass through $v_j$.  As indicated in Figure \ref{triang path}, define $w_0$ to be the product of parameters of edges through $v_0$ which correspond to tetrahedra which intersect $T$ in a triangle that lies between $f$ and the first edge of $\fg$.   If $f$ is the first edge of $\fg$, take $w_0=1$.    Given $\bz \in \mathcal{D}_0$, define
\begin{align}\label{tau} \tau(\fg)(\bz) = \sum_{l=0}^{m-1} (-1)^l \, \prod_{j=0}^l w_j.\end{align}

The function $\tau(\fg)$, which is evidently rational, is analytic near $\bz^0$.  This is because the coordinates of $\bz^0$ are bounded away from $0$ and $1$, so the functions $\zeta_1$ and $\zeta_2$ are analytic on the coordinates of $\bz$ near $\bz^0$. 

To understand the geometric significance of $\tau$, first recall that each $\bz\in\mathcal{D}_0$ determines a hyperbolic structure on $M$ and a developing map $D_{\bz}\co\widetilde{M}\to\mathbb{H}^3$, where $\widetilde{M}$ is the universal cover of $M$.  The tetrahedra immersed in $M$ lift to embeddings in $\widetilde{M}$, and these develop to ideal tetrahedra in $\mathbb{H}^3$.  Associated to $D_\bz$ is a holonomy representation $\rho_{\bz}\co\pi_1 M\to\mathrm{PSL}(2,\mathbb{C})$ which satisfies the equivariance condition $D_{\bz}(\gamma.x) = \rho_{\bz}(\gamma).D_{\bz}(x)$ for every $\gamma \in \pi_1 M$ and every $x \in \widetilde{M}$.

Fix a component $\widetilde{T}$ of the preimage of $T$ in $\widetilde{M}$ and let $\Gamma$ be the stabilizer of $\widetilde{T}$ in $\pi_1 M$.  Fix a point $\tilde{v}_0$ in the preimage of $v_0$ and a lift $\tilde{f}$ of $f$ with an endpoint at $\tilde{v}_0$.  Every edge in $M$ which meets $T$ lifts to a collection of edges in $\widetilde{M}$ which meet $\widetilde{T}$.  Their developed images all share a common ideal vertex $p_\infty \in \widehat{\C}$ which corresponds to the ends of the edges in $M$ which exit the cusp.  After post-composing $D_{\bz}$ with an isometry and conjugating $\rho_{\bz}$ by the same isometry, we may assume that $p_{\infty}=\infty$, that the tetrahedral edge containing $D_{\bz}(\tilde{v}_0)$ has its other ideal vertex at $0$, and that the tetrahedral face containing $D_{\bz}(\tilde{f})$ has its final ideal vertex at $1$.

\begin{figure}[h] 
   \centering
   \includegraphics[width=2.6in]{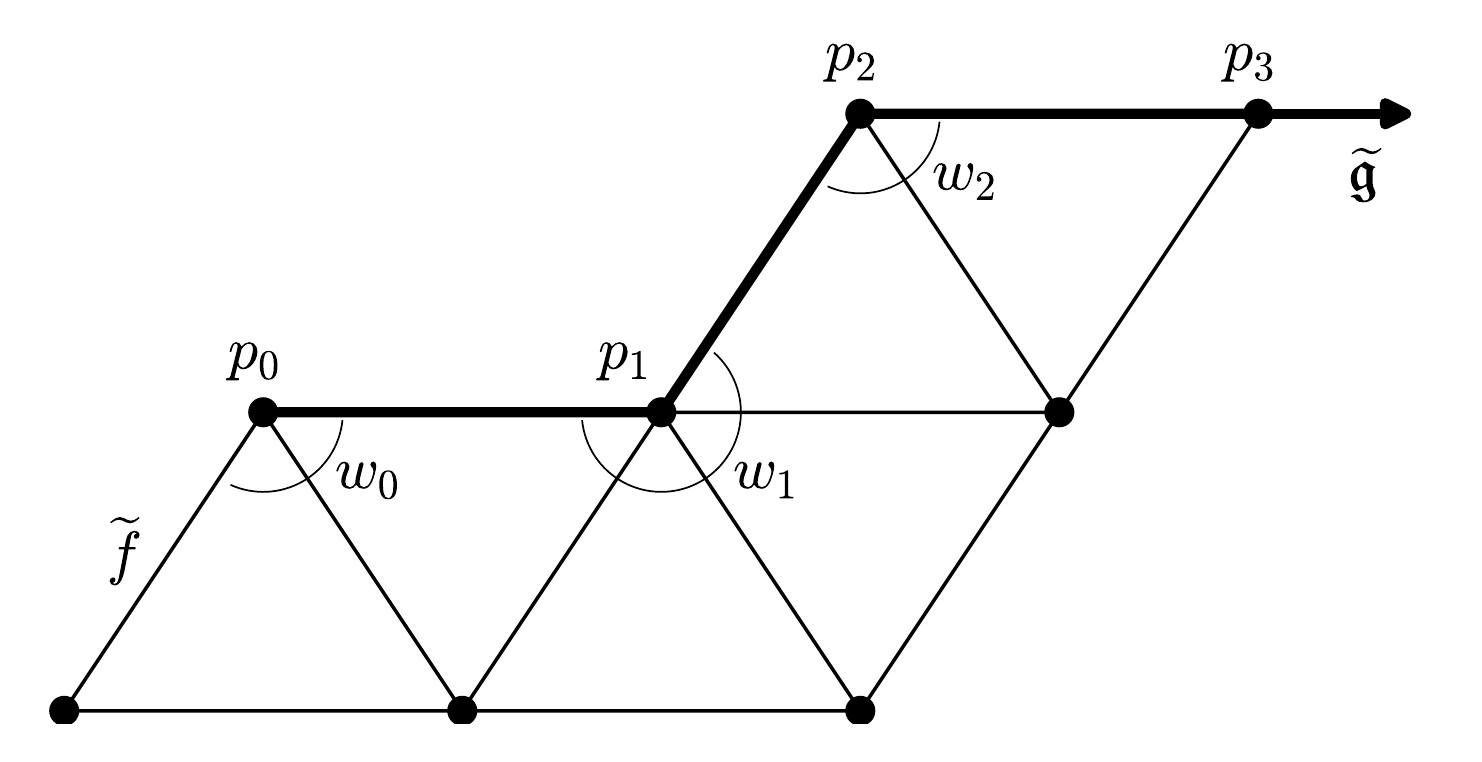} 
   \caption{Part of a lifted, developed path in a cusp cross-section.}
   \label{triang path}
\end{figure}

%
%
%
%
%

Take $\fg$ and $\{ v_0, \ldots, v_{m-1}, v_0\}$ as above and let $\tilde{\fg}$ be the lift to $\widetilde{T}$ of $\fg$ starting at $\tilde{v}_0$.  Label the vertices of $\tilde{\fg}$ by $\tilde{v}_j$ in order as we encounter them.  Define $p_j$ to be the ideal endpoint in $\C$ of the developed image of the tetrahedral edge in $\widetilde{M}$ which contains $\tilde{v}_j$.  For each $j$ strictly between $0$ and $m$, Fact \ref{cross rat} quickly implies that
\[	\frac{p_{j+1}-p_j}{p_{j-1}-p_j} =  w_j. \]
When $j=0$, the corresponding formula reduces to $p_1=w_0$.  This yields the formula
\[ p_{l+1}-p_l = (-1)^l \, \prod_{j=0}^l w_j\]
which is valid for every $l \in \{ 0, \ldots ,  m-1\}$.  Summing from $0$ to $m-1$, we find that $\tau(\fg)(\bz) = p_m$. 

We claim that $\tau(\fg)(\bz)$ is the translational part of $\rho_\bz(\gamma)$, where $\gamma$ is the element of the stabilizer $\Gamma$ of $\widetilde{T}$ (a copy of $\pi_1 T$ acting on $\widetilde{M}$ by covering transformations) represented by the curve $\fg$. Here we note that the elements of $\rho_{\bz}(\Gamma)$ all have $p_{\infty} = \infty$ as a fixed point in $\widehat{\mathbb{C}}$, so each is an affine transformation of $\C$. In particular, $\rho_{\bz}(\gamma)$ has the form $z \mapsto \mu z + \tau$ for some $\mu\neq 0$ and $\tau\in\mathbb{C}$, its \textit{translational part}, which can thus be recovered by evaluating $\rho_{\bz}(\gamma)$ at $z=0$. On the other hand, since $\tilde{\fg}$ is a lift of $\fg$ we have $\rho_\bz(\gamma).D_\bz(\tilde{v}_0)=D_\bz(\tilde{v}_m)$, and by our choice of normalization this implies $\rho_\bz(\gamma)(0)= p_m = \tau(\fg)(\bz)$, proving the claim.

The holonomy representation $\rho_{\bz^0}$ of the complete structure is faithful and takes non-trivial elements of $\Gamma$ to parabolic elements, so since $\rho_{\bz^0}(\gamma)$ is the function $z\mapsto z +  \tau(\fg)(\bz^0)$ and is not the identity, $\tau(\fg)(\bz^0)\neq 0$. This proves assertion (2).

%

Now let $\fl$ and $\fm$ be simplicial curves in $T$, both containing $v_0$, which represent a generating pair for the first homology of $c$.  Because $\tau(\fm)(\bz^0)\neq0$, the function $\tau(\fm)$ is non-zero on a neighborhood $U$ of $\bz^0$.  Let $K = U \cap \{ \bz \in \mathcal{D}_0 \, | \, \mu(\fm)=1 \}$.  As recorded above Fact \ref{cross rat}, $c$ is complete under the geometry determined by $\bz \in U$ if and only if $\bz \in K$; in particular $\bz^0 \in K$.  Define
\[ \tau_c = \frac{\tau(\fl)}{\tau(\fm)}\]
and note that $\tau_c$ does not have a pole on $U$.

For any $\bz \in K$ and $\gamma \in \Gamma$, $\rho_{\bz}(\gamma)$ is an affine transformation of the form $z \mapsto \alpha z + \beta$, for some $\alpha\ne 0, \beta \in \C$.  On the other hand, $\rho_\bz$ takes the element $\fm \in \Gamma$ represented by $\fm$ to the transformation $z \mapsto z + \tau(\fm)(\bz)$, since $\mu(\fm)(\bz)=1$.  Hence, the translational parts of $\rho_\bz(\fm \gamma)$ and $\rho_\bz(\gamma \fm)$ are 
\[\beta+\tau(\fm)(\bz) \qquad \text{and} \qquad \alpha \tau(\fm)(\bz)+\beta.\]
However $\Gamma$ is abelian, so these quantities must be the same and $\alpha=1$.  Now if we take $\gamma$ to be the element of $\Gamma$ represented by $\fl$, we see that $\mu(\fl)=1$ on $K$.  This shows that $\rho_{\bz}(\Gamma)$ is a lattice generated by the translations $z\mapsto z + \tau(\fm)(\bz)$ and $z\mapsto z + \tau(\fl)(\bz)$.  Therefore, by definition of the complex modulus, $\tau_c(\bz)$ represents the complex modulus of $c$.\end{proof}


\begin{remark} Unless $\rho_{\bz}$ and $D_{\bz}$ are normalized as in the proof of Proposition \ref{ratfun}, $\tau(\gamma)(\bz)$ is not the translational part of $\rho_{\bz}(\gamma)$. Indeed, if we still take $p_{\infty} = \infty$ but let $p_0\in\mathbb{C}$ be the other ideal endpoint of the tetrahedral edge through $D_{\bz}(\tilde{v}_0)$ and $q\in\mathbb{C}$ the final ideal vertex of the face containing $D_{\bz}(\tilde{f})$ then the proof above gives
\[ \tau(\gamma)(\bz) = \frac{p_m-p_0}{q-p_0} = \frac{(\mu-1)p_0+\tau}{q-p_0}, \]
where $\rho_{\bz}(\gamma)$ is the map $z\mapsto \mu z+\tau$. In particular, if $\mu = 1$ then $\tau(\gamma)(\bz)$ is the translational part, normalized by the quantity $q-p_0$ determined by $D_{\bz}(\tilde{f})$.\end{remark}

Here is an analog of Corollary \ref{TwoCpt} in the context of the deformation variety.

\begin{cor}\label{TwoCpt def} Let $M$ be a complete, oriented hyperbolic $3$-manifold with two cusps $c$ and $c'$, and an ideal triangulation decorated by the choice of an edge of each simplex.   Fix a cross section $T$ of the cusp $c$ and simplicial curves $\mathfrak{m}$ and $\mathfrak{l}$ representing a generating pair for $H_1(T)$, and let $\tau_c = \tau(\fl)/\tau(\fm)$ be a cusp parameter function for $c$.  If infinitely many one-cusped orbifolds produced by hyperbolic Dehn filling on $c'$ cover orbifolds with rigid cusps then for $\bz^0 \in \mathcal{D}_0$ corresponding to the complete hyperbolic structure on $M$:
\begin{enumerate}
	\item $\tau_c(\bz^{0}) \in\mathbb{Q}(i)$ or $\tau_c(\bz^{0}) \in \mathbb{Q}(\sqrt{-3})$ and
	\item $\tau_c$ is constant on the irreducible component of $\{\bz\in\mathcal{D}_0(M)\,|\,\mu(\mathbf{\fm})(\bz)=1\}$ containing $\bz^0$.
	\end{enumerate}
In particular, this holds if infinitely many hyperbolic knot complements in $S^3$ with hidden symmetries can be produced from $M$ by Dehn filling $c'$.\end{cor}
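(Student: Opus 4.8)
The plan is to transcribe the proof of Corollary \ref{TwoCpt} onto the deformation variety, with $\mathcal{D}_0$ playing the role of the canonical component $X$, the point $\bz^0$ that of the limiting canonical character, and the cusp parameter function $\tau_c = \tau(\fl)/\tau(\fm)$ of Definition \ref{cusp param} that of the function $C$ of Proposition \ref{regfun}. First I would note that the hypothesis supplies infinitely many distinct hyperbolic fillings $\{M_j\}$ of the cusp $c'$, each covering an orbifold with a rigid cusp; being distinct, their filling slopes escape to infinity. By the theory underpinning $\mathcal{D}_0$ (\cite[Ch.~4]{Th_notes}, \cite{NZ}), all but finitely many of these fillings are realized by points $\bz_j \in \mathcal{D}_0$ lying near $\bz^0$ and converging to it. Since the surviving cusp $c$ remains complete in each $M_j$, we have $\mu(\fm)(\bz_j) = 1$, so the $\bz_j$ lie on the locus $\{\mu(\fm)=1\}$, and by Proposition \ref{ratfun}(3) the number $\tau_c(\bz_j)$ represents the cusp parameter of $c$ in $M_j$. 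Because filling does not raise volume \cite[Thm.~6.5.6]{Th_notes}, each $\mathrm{vol}\,M_j \le \mathrm{vol}\,M$, so Lemma \ref{lem: finite image} leaves only finitely many cusp shapes among the $M_j$; passing to a subsequence I may assume they all share one shape, which covers a rigid Euclidean orbifold, whence all $\tau_c(\bz_j)$ lie in a single $\mathrm{PSL}_2(\Z)$-orbit $O$.

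To finish conclusion (1), I would observe that $\tau_c$ is analytic at $\bz^0$ by Proposition \ref{ratfun}(1), so $\tau_c(\bz_j) \to \tau_c(\bz^0)$; as $O$ is discrete this forces $\tau_c(\bz_j) = \tau_c(\bz^0)$ for large $j$, and in particular $\tau_c(\bz^0) \in O$. Thus $\tau_c(\bz^0)$ is the modulus of a Euclidean torus that covers a rigid Euclidean $2$-orbifold, so (as in the proof of Lemma \ref{lem: finite image}) its uniformizing lattice $\Lambda$ is a finite-index subgroup of the translation lattice $\Lambda_0$ of a $(2,4,4)$-, $(2,3,6)$-, or $(3,3,3)$-triangle group. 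After rescaling, $\Lambda_0$ is the Gaussian lattice $\Z[i]$ in the first case and the Eisenstein lattice (the ring of integers of $\Q(\sqrt{-3})$) in the other two; since $\tau(\fm)(\bz^0)$ and $\tau(\fl)(\bz^0)$ generate $\Lambda \subset \Lambda_0$, their ratio $\tau_c(\bz^0)$ lies in $\Q(i)$ or $\Q(\sqrt{-3})$ respectively, giving (1).

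For conclusion (2) I would use that the component of $\mathcal{D}_0$ through $\bz^0$ is smooth of complex dimension two (the number of cusps of $M$), with $u_c = \log \mu(\fm)$ and $u_{c'}$ serving as local coordinates near $\bz^0$ \cite{NZ}. In particular $du_c \neq 0$ at $\bz^0$, so $\{\mu(\fm)=1\} = \{u_c = 0\}$ is a smooth curve there; hence $\bz^0$ lies on a unique irreducible component $D$ of this locus, and the $\bz_j$, converging to $\bz^0$ within the locus, lie on $D$ for large $j$. The restriction of $\tau_c$ to the irreducible curve $D$ is rational and takes the single value $\tau_c(\bz^0)$ at the infinitely many points $\bz_j$, so $\tau_c - \tau_c(\bz^0)$ has infinitely many zeros on $D$ and vanishes identically; thus $\tau_c$ is constant on $D$. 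Finally, the last assertion follows from \cite[Prop.~9.1]{NeumReid}: a hyperbolic knot complement in $S^3$ with hidden symmetries covers an orbifold with a rigid cusp, so obtaining infinitely many such knot complements by filling $c'$ is a special case of the hypothesis.

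The step I expect to be the main obstacle is the local bookkeeping at $\bz^0$ that legitimizes (2): one must know both that the filled structures deposit the points $\bz_j$ on the single locus $\{\mu(\fm)=1\}$ and that this locus is a smooth curve near $\bz^0$, so that ``the component containing $\bz^0$'' is unambiguous and actually contains the $\bz_j$. This is precisely where I would lean on the Neumann--Zagier description of $\mathcal{D}_0$ near the complete structure, in the form that the cusp holonomies furnish local analytic coordinates; the remainder is a faithful transcription of Corollary \ref{TwoCpt} with $\tau_c$ in place of $C$.
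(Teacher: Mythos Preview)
Your proposal is correct and follows essentially the same approach as the paper's own proof: you transcribe the argument of Corollary~\ref{TwoCpt} to $\mathcal{D}_0$, using Lemma~\ref{lem: finite image} and discreteness of the $\mathrm{PSL}_2(\Z)$-orbit to obtain~(1), and the Neumann--Zagier local coordinates to show $\{\mu(\fm)=1\}$ is a smooth curve near $\bz^0$ on which the rational function $\tau_c$ takes one value infinitely often, hence is constant, for~(2). Your treatment is in fact slightly more explicit than the paper's at the step deducing $\tau_c(\bz^0)\in\Q(i)$ or $\Q(\sqrt{-3})$, and your identification of the ``main obstacle'' matches exactly where the paper appeals to \cite[\S 4]{NZ}.
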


\begin{proof} The proof tracks that of Corollary \ref{TwoCpt}. In the same way, it follows from Lemma \ref{lem: finite image} that if $M$ has infinitely many fillings of $c'$ covering orbifolds with rigid cusps then $\tau_c$ is constant on a set $\{\bz^i\}\in\mathcal{D}_0(M)$ that accumulates at $\bz^0$. For each such $i$, $\tau_c(\bz^i)$ is the complex modulus of a Euclidean torus that covers a $(2,4,4)$-, $(2,3,6)$-, or $(3,3,3)$-triangle orbifold, so either $\tau_c(\bz^i)\in\mathbb{Q}(i)$ (in the first case) or $\tau_c(\bz^i)\in\mathbb{Q}(\sqrt{-3})$ (in the latter two). Taking a limit as $i\to\infty$ yields criterion (1).

Each filling has a complete cusp at $c$, so the points $\bz^i$ all lie in the algebraic subset $V$ defined by $\mu(\mathbf{\fm})=1$.   By Section 4 of \cite{NZ}, $\log \mu(\mathbf{\fm})$ is analytic on a neighborhood $U$ of $\bz^0$ in $\mathcal{D}_0(M)$.  On $U$, the locus  $\{\log \mu(\mathbf{\fm})=0\}$ is identical to $\{ \mu(\mathbf{\fm})=1 \}$ and the former is a smooth codimension-one analytic submanifold of $U$.  This means that there is a unique algebraic component $V_0$ of $V$ which contains $\bz^0$.  Furthermore $V_0$ is smooth at $\bz^0$.  Since $\bz^i \to \bz^0$, the tail of this sequence must be contained in $V_0$.  The function $\tau_c$ is rational on the curve $V_0$ and constant on the tail of the sequence $\{ \bz^i \}$.  Therefore, $\tau_c$ must be constant on $V_0$.
\end{proof}

\section{An example from the literature: the Whitehead link}\label{whitehead} 

In this mostly expository section, we consider the complement $M_W$ of the  link $5^2_1$. This is the two-component hyperbolic link with fewest crossings and is commonly known as the Whitehead link. $M_W$ is arithmetic and covers the Bianchi orbifold $\mathbb{H}^3/\mathrm{PSL}(2,\mathcal{O}_1)$ so its cusps each satisfy condition (1) of Corollaries \ref{TwoCpt} and \ref{TwoCpt def}. We will show that they fail the second conditions.  This implies the following fact. 

\begin{fact}\label{whitehead surg} At most finitely many knot complements with hidden symmetries arise as hyperbolic Dehn fillings on the Whitehead link.\end{fact}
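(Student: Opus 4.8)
The plan is to reduce everything, via the contrapositive of Corollary \ref{TwoCpt}, to the single assertion that the two cusps of $M_W=S^3-5^2_1$ \emph{fail} its condition (2); that is, that neither cusp is geometrically isolated from the other. Granting non-isolation, the Fact follows formally. By Neumann--Reid's Proposition 9.1 every knot complement with hidden symmetries covers an orbifold with a rigid cusp, so it suffices to bound the number of fillings of $M_W$ producing such covers. Both components of $5^2_1$ are unknotted, so each knot complement obtained by filling $M_W$ comes from filling one of its two cusps. Applying Corollary \ref{TwoCpt} with one cusp designated the $K'$-cusp: since its conclusion (2) fails, its hypothesis must fail, so only finitely many fillings of that cusp cover rigid-cusp orbifolds; the same holds for the other cusp with the roles reversed. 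Summing the two finite counts gives the Fact.

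It remains to establish non-isolation, and here I would run the argument of the proof of Corollary \ref{TwoCpt def}(2) in reverse. By the equivalence recorded in the Remark below Corollary \ref{TwoCpt}, geometric isolation of a cusp $c$ from the other cusp $c'$ is exactly constancy of a cusp parameter function on the curve of hyperbolic structures in which $c$ stays complete. In the deformation-variety language of Section \ref{defvar} this curve is the component $V_0$ of $\{\mu(\fm)=1\}$ through $\bz^0$ and the function is $\tau_c=\tau(\fl)/\tau(\fm)$ from Proposition \ref{ratfun}; equivalently, on the character variety it is $D=X_0\cap\{I_\mu=2\}$ with $\mu$ a meridian of the unfilled component and $C$ the function of Proposition \ref{regfun}. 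So the task is to produce the variety of hyperbolic structures on $M_W$ together with its cusp parameter function explicitly and verify that the latter is \emph{non}-constant along $V_0$. Because $M_W$ decomposes into a single regular ideal octahedron (equivalently a handful of ideal tetrahedra), the deformation variety and $\tau_c$ are completely explicit; this is essentially the computation of Neumann--Reid --- the source of the rational Whitehead cusp parameter mentioned in the introduction --- together with Hilden--Lozano--Montesinos. Concretely I would parametrize $V_0$ by a single complex variable, substitute into $\tau_c$, and read off that the resulting rational function is non-constant. Since swapping the two components of $5^2_1$ is an isometry of $M_W$, a single such computation settles non-isolation in both directions.

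The main obstacle is that this non-constancy is invisible from the complete structure alone. Although $M_W$ is arithmetic and its complete cusp shapes are rigid --- lying in $\mathbb{Q}(i)$, consistent with the cover $M_W\to\mathbb{H}^3/\mathrm{PSL}(2,\mathcal{O}_1)$ and its $(2,4,4)$-cusp, which is precisely why condition (1) of Corollary \ref{TwoCpt} holds --- rigidity at $\bz^0$ says nothing about how the shape of $c$ moves as $c'$ is filled. Isolation is a statement about the \emph{global} behavior of the rational function $\tau_c$ on the curve $V_0$, and it is exactly this global behavior that separates $M_W$ from the knot complements we wish to rule out. As intuition for the non-constancy, filling one cusp of $5^2_1$ along the slopes $1/n$ yields the twist knots, whose cusp shapes approach the rigid Whitehead cusp shape as $n\to\infty$ while genuinely varying for finite $n$; each such shape is a value of $\tau_c$ at the corresponding point of $V_0$, so the family exhibits the varying cusp that the octahedral computation must confirm. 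That computation --- the one genuine calculation the proof requires --- is where the explicit description of $M_W$ does its work.
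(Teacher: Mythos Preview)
Your proposal is correct and follows essentially the same route as the paper: reduce to showing that condition (2) of Corollary \ref{TwoCpt} fails by exhibiting the cusp parameter function explicitly on the curve $V_0$ (via the Neumann--Reid octahedral computation, or equivalently the Hilden--Lozano--Montesinos character-variety picture) and reading off non-constancy, then invoke the cusp-swapping involution (the paper's Fact \ref{involution}) to conclude for both cusps at once. The paper carries out precisely this computation, obtaining $\tau_{c_1}=\frac{4x}{1-x^2}-2$ on $V_0$ in the deformation-variety model and $C=I_{ab}^2+1$ on $W_0$ in the character-variety model; your outline is an accurate description of that argument, with the explicit formula left to the cited sources.

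One small sharpening: on the character variety, the locus $X_0\cap\{I_\mu=2\}$ is reducible---it has a component of reducible characters (the factor $I_b-I_{ab}$ in the paper's Section \ref{whitehead char})---so the relevant curve is the \emph{irreducible component} $W_0$ through the canonical character, not the full intersection. This matches what you say on the deformation side (``the component $V_0$ \ldots\ through $\bz^0$'') and is handled the same way in the paper.
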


Actually, more is known in this case.  The knot complements produced by surgery on one component of $M_W$ are complements of twist knots, and by \cite{ReidWalsh}, the only one of these which has hidden symmetries is the complement of the figure-eight knot.  The Whitehead link is well-studied, and we take this opportunity to draw together some threads from the literature and compare the perspectives of both Sections \ref{charvar} and \ref{defvar} in the context of this familiar example.  We begin with a general observation.

\begin{fact}\label{involution} For complete hyperbolic $3$-manifold $M$ with two cusps, $c_1$ and $c_2$, and an involution that exchanges them, $c_1$ is geometrically isolated from $c_2$ if and only if $c_2$ is geometrically isolated from $c_1$.\end{fact}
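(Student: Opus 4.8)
The plan is to use the involution to manufacture a shape-preserving bijection between the Dehn fillings of the two cusps, under which ``this filling changes the shape of the surviving cusp'' corresponds exactly to the same phenomenon on the other side. Since geometric isolation of $c$ from $c'$ is precisely the statement that only finitely many hyperbolic fillings of $c'$ change the shape of $c$, transporting this finiteness condition across the bijection yields the desired equivalence. An alternative would be to argue on the deformation or character variety, using the involution's action on it together with the motto (the parameter of $c$ being constant on the curve where $c$ stays complete), but the direct geometric argument seems cleanest.

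First I would fix notation. By Mostow--Prasad rigidity I may take the involution to be an isometry $\iota \co M \to M$ exchanging $c_1$ and $c_2$. Truncating $M$ along horospherical cross-sections, $\iota$ restricts to a homeomorphism between the boundary tori $T_1$ and $T_2$, inducing an isomorphism $\iota_* \co H_1(T_2) \to H_1(T_1)$ and hence a bijection $\Phi$ from slopes on $T_2$ to slopes on $T_1$. Since $\iota$ carries the meridian disk of a filling solid torus at $c_2$ along a slope $\alpha$ to a disk bounding $\Phi(\alpha)$ at $c_1$, it extends to a homeomorphism
\[ M(c_2 = \alpha) \;\cong\; M(c_1 = \Phi(\alpha)) \]
sending the unfilled cusp $c_1$ of the source to the unfilled cusp $c_2$ of the target. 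In particular $M(c_2=\alpha)$ is hyperbolic if and only if $M(c_1 = \Phi(\alpha))$ is, and when they are, this homeomorphism is homotopic to an isometry, again by Mostow--Prasad rigidity.

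Next I would record the shape correspondence. Because that homeomorphism is homotopic to an isometry matching the surviving cusps, and isometries restrict to similarities on horospherical cross-sections, the shape of the $c_1$-cusp of $M(c_2=\alpha)$ equals the shape of the $c_2$-cusp of $M(c_1=\Phi(\alpha))$ under the fixed relabeling $\psi$ of similarity classes induced by $\iota$ (the identity if $\iota$ preserves orientation, complex conjugation of moduli if it reverses it). Taking $\alpha$ to be the empty filling shows $\psi$ sends the complete shape of $c_1$ to the complete shape of $c_2$. Since $\psi$ is a bijection, it follows that the shape of $c_1$ in $M(c_2=\alpha)$ agrees with its complete value if and only if the shape of $c_2$ in $M(c_1=\Phi(\alpha))$ agrees with \emph{its} complete value. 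Running the count, if only finitely many hyperbolic fillings $\alpha$ of $c_2$ change the shape of $c_1$, then by this equivalence and the bijectivity of $\Phi$ on hyperbolic slopes only finitely many fillings $\Phi(\alpha)$ of $c_1$ change the shape of $c_2$; that is, $c_1$ geometrically isolated from $c_2$ implies $c_2$ geometrically isolated from $c_1$. The reverse implication is the same argument with the roles of the cusps interchanged (equivalently, since $\iota = \iota^{-1}$), giving the equivalence.

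The step needing the most care, and the main obstacle, is justifying that $\iota$ upgrades to a shape-matching identification of the filled manifolds: that its topological extension over the fillings is homotopic to an isometry whose restriction to the surviving cusp is a similarity pairing the correct cross-sections. This rests on Mostow--Prasad rigidity to promote the homeomorphism to an isometry, together with the observation that a homotopy preserves the cusp pairing, so the isometry still sends the unfilled cusp of one filling to the unfilled cusp of its partner. The remaining bookkeeping is routine, provided one allows for an orientation-reversing $\iota$, in which case $\psi$ is complex conjugation rather than the identity; this does not affect the changed-versus-unchanged dichotomy that drives the count.
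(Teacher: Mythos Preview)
Your proof is correct and follows essentially the same approach as the paper's own argument: use Mostow--Prasad rigidity to realize the involution as an isometry, observe that it induces a bijective, shape-preserving correspondence between hyperbolic fillings of $c_1$ and those of $c_2$, and transport the finiteness condition across this bijection. The paper compresses this into a single sentence, whereas you spell out the slope bijection, the extension over Dehn filling, and the orientation-reversing case explicitly; these elaborations are all sound and add no new ideas beyond what the paper intends.
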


Theorem 3 of \cite{NR} shows that in general, geometric isolation is not a symmetric relation. But, by Mostow rigidity, an automorphism of $M$ taking $c_1$ to $c_2$ induces a bijective, isometric correspondence from the one-cusped hyperbolic manifolds obtained by filling $c_1$ to those obtained by filling $c_2$, which establishes the fact.  It is well known that $M_W$ has an involution that exchanges its two cusps, so, in what follows, we focus on a fixed cusp of $M_W$.
 
\begin{figure}[h] 
   \centering
   \includegraphics[width=4.3in]{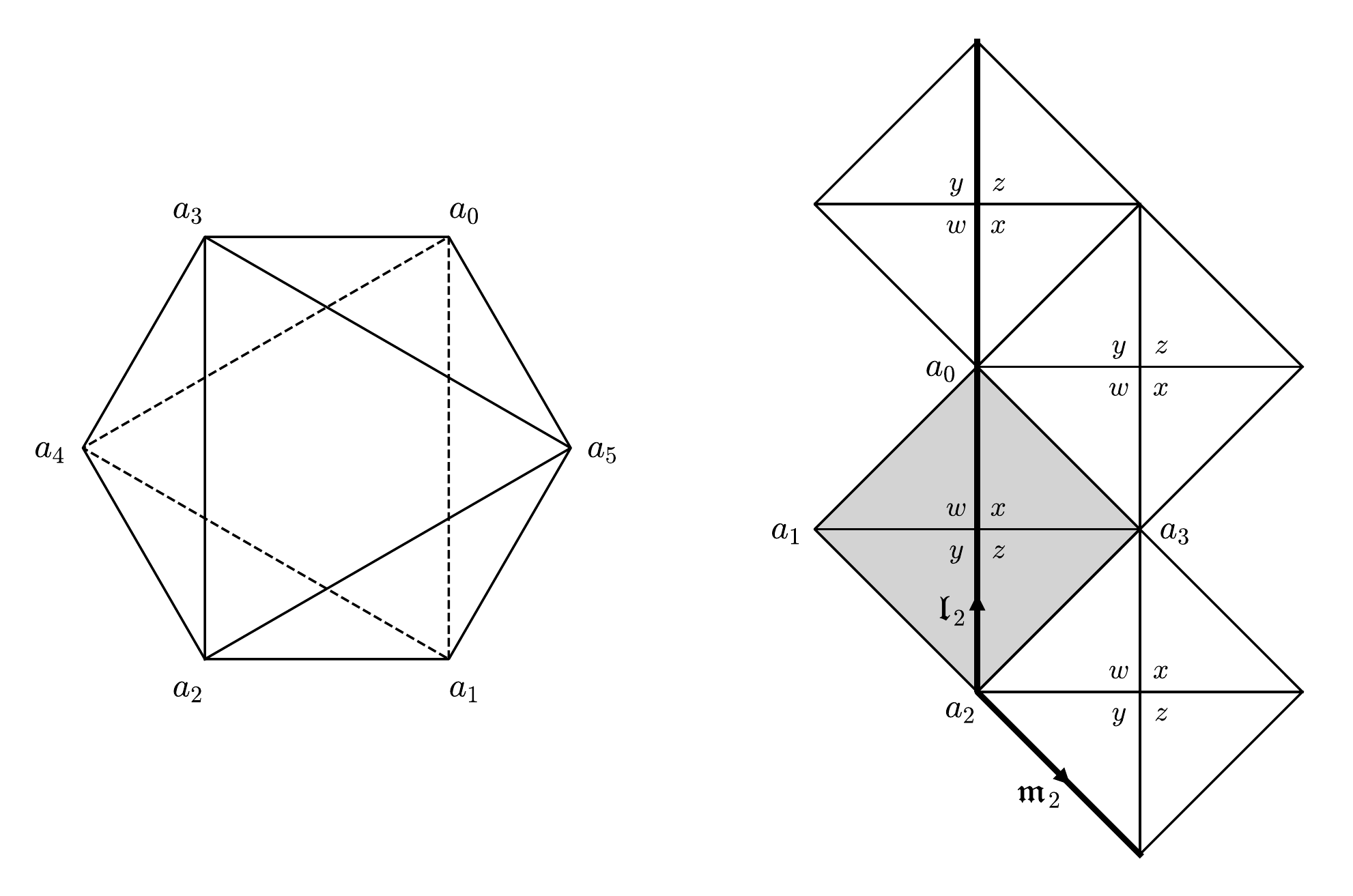} 
   \caption{We label the vertices of $\mathcal{O}$ as indicated on the left.  On the right, $
   \mathcal{O}$ (shaded) is placed in $\mathbb{H}^3$ so that $(a_3,a_4,a_5) = (1, \infty, 0)$.} 
   \label{fig: oct}
\end{figure}

Let $\mathcal{O}$ be a regular ideal octahedron embedded in Euclidean 3-space with ideal vertices labeled as in Figure \ref{fig: oct}.  Let $\phi, \psi, \eta$, and $\rho$ be the orientation preserving Euclidean isometries with the following actions on ordered triples of vertices of $\mathcal{O}$.
\begin{align*}
\phi(a_0,a_3,a_5)&=(a_1,a_0,a_4) & \psi(a_2,a_3,a_5)&=(a_1,a_0,a_5) \\
\eta(a_2,a_3,a_4)&=(a_1,a_2,a_5) & \rho(a_0,a_3,a_4)&=(a_1,a_2,a_4)
\end{align*}
As shown in \cite{NeumReid}, $M_W$ is homeomorphic to the identification space given by the face pairings $\{ \phi, \psi, \eta, \rho \}$.

\begin{remark}
Although we are following \cite{NeumReid}, our figure is the mirror image of their Figure 17.  We define $\mathcal{O}$ in this way to account for the fact that they use a left-handed convention for $\zeta_1$ and $\zeta_2$.
\end{remark}

\subsection{The deformation variety}\label{whitehead def} Here, we use the strategy of Proposition \ref{ratfun} to perform the elementary calculation alluded to in the proof of Theorem 6.1 in \cite{NeumReid}.  This calculation produces the formula below, valid on the the curve $V_0$ of points in $\mathcal{D}_0(M_W)$ where $c_1$ is complete, for a cusp parameter function for a cusp $c_1$ of $M_W$.  
\begin{align}\label{NR param} \tau_{c_1} = \frac{4x}{1-x^2} - 2 \end{align}
By \cite{NeumReid}, the single parameter $x$ above parametrizes $V_0$. Thus $\tau_{c_1}$ is evidently non-constant on $V_0$, proving that $c_1$ is not geometrically isolated from the other cusp $c_2$ of $M_W$. 

Our first step is to add a central edge from $a_4$ to $a_5$ in $\mathcal{O}$, thus decomposing $\mathcal{O}$ into a union of four ideal tetrahedra.  Following \cite{NeumReid}, we assign indeterminant edge parameters $w,x,y$, and $z$ to the edges $[a_0,a_1]$, $[a_0,a_3]$, $[a_1,a_2]$, and $[a_2,a_3]$ respectively.  Then, if $(w,x,y,z) \in \C^4$ is a point in $\mathcal{D}_0(M_W)$, we can place $\mathcal{O}$ in $\mathbb{H}^3$ so that 
\[(a_0,a_1,a_2,a_3,a_4,a_5) = (x,wx,wxy,1,\infty,0).\]
When we develop the octahedron across $\mathbb{H}^3$ using the isometries induced by the face pairings $\phi, \psi, \eta$, and $\rho$, we construct the diagram on the right in Figure \ref{fig: oct}.  It shows the lift to $\C$ of the induced triangulation of a cross-section of  $c_2$.  The parameters $w,x,y$, and $z$ label the edges of the tetrahedra to which they are assigned in the Neumann-Reid triangulation.  Since we use the mirror image of the octahedron in \cite{NeumReid} and Neumann and Reid view the cusp from outside the manifold, the right-hand part of our Figure \ref{fig: oct} is combinatorially identical to the right-hand part of their Figure 14.  

 A 4-tuple $(w,x,y,z) \in \C^4$ is in $\mathcal{D}_0(M_W)$ if and only if the gluing equations 
\[1=wxyz \quad \text{and} \quad 1= \zeta_1(w) \zeta_1(x) \zeta_1(y) \zeta_1(z) \zeta_2(w)^2 \zeta_2(x)^2
\]
are satisfied.  This is equivalent to requiring $z=(xyz)^{-1}$ and  
\[ 0= w^2x^2y^2-w^2x^2y+wx-wy-xy+y.\]
As in \cite{NeumReid}, the heavy oriented simplicial paths labeled $\mathfrak{l}_2$ and $\mathfrak{m}_2$ represent generators for the first homology of the cusp $c_2$.

By applying the aforementioned cusp swapping involution, we obtain a corresponding diagram for the cusp $c_1$.  This diagram differs from that in Figure \ref{fig: oct} only by interchanging the edge labels $y$ and $z$ (see also the left image of Figure 14 in \cite{NeumReid}).   In the diagram for $c_1$, we refer to the simplicial curves which correspond to $\mathfrak{l}_2$ and $\mathfrak{m}_2$ as $\mathfrak{l}_1$ and $\mathfrak{m}_1$.  As with $c_2$, these curves form a basis for the first homology of the cusp $c_1$.

We know that the equation $\mu(\mathfrak{m}_1) =1$ determines the subset $V_0$ of $\mathcal{D}_0(M_W)$ for which the cusp $c_1$ is complete.  Neumann and Reid show that $V_0$ is parametrized by $x$ using the equality
\[ (w,x,y,z) = \left( -1/x, x, -1/x,x\right).\]
Equation \ref{NR param} gives a a cusp parameter function on $V_0$ which records the complex modulus of the shape of $c_1$ relative to $\fm_1$ and $\fl_1$.

Our computation begins by selecting the single edge $\fm_1$ as the reference edge $f$ of Proposition \ref{ratfun}, and thus obtaining $\tau(\fm_1) = 1$ without computation.  The lifted representative of $\fl_1$ has five vertices, which we call $\tilde{v}_0,\hdots,\tilde{v}_4$ indexed in agreement with the orientation of $\fl_1$.  The vertices $\tilde{v}_0$ and $\tilde{v}_4$ lie in the preimage of a single vertex in the cusp cross-section.  

To compute the values $\{ w_j \}_0^3$ needed in our formula for $\tau(-\fl)$, we use the functions $\zeta_1$ and $\zeta_2$ from (\ref{edge fun}), the usual right hand convention shown in Figure \ref{fig: shapes}, and a diagram of a cross-section of $c_1$ obtained from Figure \ref{fig: oct} by swapping labels $y$ and $z$ and substituting $- \frac1x$ for $w$ and $y$.  This yields
\begin{align*}
w_0&=\zeta_1(x) \zeta_2\left(-1/x\right) \zeta_1\left(-1/x\right)=\frac{x}{1-x} & w_1&=x \left( -1/x \right) = -1 \\
w_2&= \zeta_2(x) \zeta_1\left(-1/x\right) \zeta_2(x) \zeta_1(x) = \frac{1-x}{x(x+1)} & w_3&=x\left(-1/x\right)=-1
\end{align*}
and 
\begin{align*} 
\tau_{c_1} &= \tau(\fl_1) = w_0-w_0w_1+w_0w_1w_2-w_0w_1w_2w_3 \\ &= \frac{4x}{1-x^2}-2,\end{align*}
recovering equation (\ref{NR param}) as desired.

\subsection{The character variety}\label{whitehead char} Next, using the techniques of Section \ref{charvar}, we compute a cusp parameter function on the character variety of $M_W$.  Recall that $M_W$ is the identification space of the octahedron $\mathcal{O}$ by the face pairings $\phi, \psi, \eta$, and $\rho$.  Using Poincar\'e's polyhedron theorem, we obtain the presentation \[\left\langle \phi, \psi, \eta, \rho \, \big| \, \phi\psi=\rho \phi, \, \rho \phi^{-1} \psi = \eta, \, \psi \eta = \eta \rho \right \rangle\] for $\pi_1 M_W$.  If we let $a=\phi \psi^{-1}$, $b=\psi$, and $\omega = ba(bab)^{-1}ab$, this simplifies to the presentation $\pi_1 M_W =\langle a,b \, \big| \, a\omega = \omega a \rangle$, which coincides with that given in \cite[\S 5]{HLM}.  By considering the action of the face pairings on $\mathcal{O}$, we find that every neighborhood of $a_4$ and $a_5$ enters the cusp $c_2$. All other vertex neighborhoods enter the cusp $c_1$.  Since $a$ fixes $a_0$ and $b$ fixes $a_5$, $a$ and $b$ are meridians for $c_1$ and $c_2$ respectively.

Let \[ p= I_aI_b - (I_a^2+I_b^2-2)I_{ab} + I_aI_bI_{ab} ^2 - I_{ab} ^3. \]
As shown in \cite{HLM}, the canonical component $X_0$ of $X(\pi_1 M_W)$ is algebraic set in $\C^3$ determined by $p$.  

Every hyperbolic Dehn filling of $c_2$ under which $c_1$ remains complete has a $\text{SL}_2 \C$  holonomy character on $X_0$ which satisfies $I_a=2$.  The corresponding specialization of $p$ factors as
\[  (I_{ab} ^2-I_{ab} I_b+2)(I_b-I_{ab} ).\]
Characters which satisfy $0=I_b-I_{ab}$ are characters of reducible representations (cf. \cite[\S 5]{HLM}), so the holonomy character for each hyperbolic Dehn filling of $c_2$ satisfies
\[  0=I_{ab} ^2-I_{ab} I_b+2.\]
We refer to the curve of solutions to this equation as $W_0$.  Because this equation expresses $I_b$ as a rational function in $I_{ab}$, $I_{ab}$ parametrizes $W_0$.

Take $\mu = a$, $\mu' = b^{-1}a^{-1} b$, and $\lambda = w$.  Then $I_\mu=I_{\mu'}=I_a$, $\lambda \mu'=[b,a]$, and $\mu\mu'=\left[a,b^{-1}\right]$.  Using trace relations (see eg.~\cite[\S 3.4]{MacMat}), we obtain
\[ I_{\lambda\mu'} = I_{\mu\mu'} = I_a^2+I_b^2+I_{ab}^2-I_aI_bI_{ab}-2. \] 
On $W_0$,  this can be expressed as a function of $I_{ab}$, namely
\[ I_{\lambda\mu'} = I_{\mu\mu'} = 2 I_{ab}^{-2} \, (I_{ab}^2+2).\]
More trace relations (or matrix computations) show that $I_\lambda=-2$ on $W_0$.  Using these values in Equation (\ref{see?}) from Proposition \ref{regfun} yields a cusp parameter function \begin{align}\label{see} C = I_{ab}^2+1.\end{align}  Since $W_0$ is parametrized by $I_{ab}$, $C$ is non-constant.

\subsection{Reconciling the cusp parameter functions}
Next we verify that the formulas (\ref{NR param}) and (\ref{see}) differ by a M\"obius transformation, after expressing the character $I_{ab}$ in terms of the tetrahedral parameter $x$. (This is tantamount to composing $C$ with a map $\Theta$ from the curve $V_0 \subset \mathcal{D}(M_W)$ to $W_0\subset X(\pi_1 M_W)$, as described by eg.~Champanerkar \cite{Champ}.) Suppose that $(w,x,y) \in \mathcal{D}_0(M_W)$ and recall that
\begin{align*}
 \phi(a_0, a_3, a_5) &= (a_1, a_0, a_4), & \psi(a_2, a_3, a_5)&= (a_1, a_0, a_5),
 \end{align*}
$a=\phi \psi^{-1}$, and $b=\psi$.  We have placed $\mathcal{O}$ in $\mathbb{H}^3$ so that 
\[ (a_0, a_1, a_2, a_3, a_4, a_5) = (x,wx, wxy, 1, \infty, 0). \]
The face pairings $\phi$ and $\psi=b$ can now be represented as hyperbolic isometries
\[ \phi= \begin{bmatrix} x(wx-1) & -x^2(w-1) \\ x-1 & 0\end{bmatrix} \quad \text{and} \quad b= \begin{bmatrix} x(wxy-1) & 0 \\ xy-1 & xy(w-1)\end{bmatrix},
\]
viewed as matrices in $\text{PGL}_2 \C$.  Likewise $a$ and $ab$ are represented by
\[\begin{bmatrix} x(wxy+xy-y-1) & -x^2(wxy-1) \\ y(x-1) & 0\end{bmatrix}  \quad \text{and} \quad \begin{bmatrix} x(wx-1) & -x^2(w-1) \\ x-1 & 0 \end{bmatrix}.
 \]
To restrict our attention to $V_0$, we set $w=y=-1/x$ and obtain
\begin{align*}
a&= \begin{bmatrix} 2x& -x^2 \\ 1 & 0 \end{bmatrix} & b&= \begin{bmatrix} x(x-1) & 0 \\ 2x & -x-1 \end{bmatrix} &
ab&= \begin{bmatrix} -2x & x(x+1) \\ x-1 & 0 \end{bmatrix}.
\end{align*}
 When we normalize these matrices to have determinant one and take their traces, we get a map $\Theta \co V_0 \to X(\pi_1 M_W)$ given by
 \begin{align*}
 I_a&= 2 &
 I_b&=\frac{x^2-2x-1}{\sqrt{x(1-x^2)}} &
 I_{ab}&=\frac{-2x}{\sqrt{x(1-x^2)}}.
 \end{align*}
 It is easy to see that the image of $\Theta$ is contained in $W_0$.  Moreover,
\[C \Theta(x) = \frac{4x}{1-x^2} = \tau_{c_1} + 2\]
as desired. (Recall that changing generators for $H_1(\mathrm{cusp})$ changes the cusp parameter by an element of $\mathrm{PSL_2}(\mathbb{Z})$.)

\section{The link $6^2_2$}\label{sixtwotwo}
In this section, we study the link pictured in Figure \ref{$6^2_2$}.  This link is commonly known as the link $6_2^2$ from Rolfsen's table \cite[App.~C]{Rolfsen}) and also as the two-bridge link $L\left( \frac{3}{10} \right)$.  Let $M$ denote its complement in $S^3$.  

This link is exceptional in that it is a two-bridge link with very low crossing number and its complement is arithmetic and commensurable with the Bianchi orbifold $\H^3/\mathrm{PSL}(2,\mathcal{O}_3)$, where $\mathcal{O}_3$ denotes the ring of integers for $\mathbb{Q}(\sqrt{-3})$.  It is well known that there is a $\mathrm{PSL}(2,\mathcal{O}_3)$-invariant tiling of $\H^3$ by regular ideal tetrahedra. It follows that, like those of the Whitehead link complement, the cusp cross-sections of $M$ cover rigid Euclidean orbifolds.

Next, we study the deformation variety $\mathcal{D}_0(M)$.    As in the previous section, we intend to find a formula for a cusp parameter function for a cusp of $M$ and to show that this function is non-constant on the locus in $\mathcal{D}_0(M)$ where the other cusp remains complete.  As before, this will show that at most finitely many knot complements with hidden symmetries arise as hyperbolic Dehn surgeries on the link $6^2_2$.

We first triangulate $M$ using a procedure for triangulating two-bridge link complements which was laid out by Sakuma and Weeks in Chapter II of \cite{SW}. 

\begin{figure}[h]
\begin{tikzpicture}[scale=.27]

\begin{scope}[xshift=-3in]
\draw[ultra thick](.30,-.55) to [out=135, in=-90] (0,0)  to [out=90,in=-135] (1.65,1.45) to [out=45,in=-45] (2.15,4);
\draw [ultra thick](.55,-.95) to [out=-45,in=-30] (-2,-3) to [out=160, in=-90](-4,0) to [out=90, in=-230] (-1.75,4.25);
\draw[ultra thick](-.17,4.67) to [out=-230,in=150](-2,1.5) to [out=-40, in=130] (.15,.75);
\draw[ultra thick](.42,.54) to [out=-50, in=50] (.13,-1.25) to [out=-130,in=-40] (.30,-1.75);
\draw[ultra thick](-1.50,3.88) to [out=-50,in=-155](.55,4.8) to [out=25,in=135](1.85,4.4);
\draw[ultra thick](0.21,4.23) to [out=-45,in=-180](1.3,3.7) to [out=0,in=-135](2.26,4.5)to [out=45,in=110](4.75,1.5) to [out=-70,in=50](4.20,-2) to [out=-130,in=-45](.70,-2.10);
\end{scope}

\end{tikzpicture}
\caption{The link $6^2_2$.}
\label{$6^2_2$}
\end{figure}
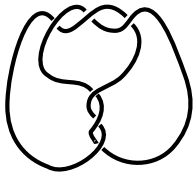

For $i\in\{1,2,3,4\}$, let $X_i$ be a copy of $\mathbb{R}^2 - \mathbb{Z}^2$ each decorated with the set of all lines through $\Z^2$ which have slopes belonging to a set of three given slopes.  For each $i$, the slopes are as follows
\begin{align*}
	& X_1: \{0,1,1/2\} && X_2 : \{0,1/2,1/3\} && X_3 : \{0,1/3,1/4\} && X_4 : \{1/3,1/4,2/7\}. \end{align*}
	
The union of these lines divides $X_i$ into ideal triangles. To see this, notice that the three slopes are vertices of a Farey triangle, so there is an element of $\mathrm{SL}_2(\mathbb{Z})$ taking the lines on $X_i$ to the {\it grid lines} on $\mathbb{R}^2$ with slopes $0$, $1$, and $\infty$.  It is obvious that these grid lines triangulate the plane.

For each $i<4$, we identify $X_i$ to $X_{i+1}$ along the lines they share. For any fixed $i$, the line segments in $X_i$ but not $X_{i+1}$ are called {\it top edges}, those in $X_{i+1}$ but not $X_i$ are called {\it bottom edges}, and the edges common to both $X_i$ and $X_{i+1}$ are called {\it side edges}. Again for fixed $i$, cutting the identification space of $X_i$ and $X_{i+1}$ along its side edges yields a collection of copies of the boundary of an ideal tetrahedron.  We fill each such copy with an ideal tetrahedron in the identification space of all the $X_i$, calling the resulting complex of ideal tetrahedra $\widetilde{W}$.

$\widetilde{W}$ is homeomorphic to $(\mathbb{R}^2-\mathbb{Z}^2)\times [0,1]$, with three layers of ideal tetrahedra, one between $X_i$ and $X_{i+1}$ for each $i<4$. Let $P$ be the \textit{pillowcase group} generated by Euclidean rotations of angle $\pi$ around the points of $\mathbb{Z}^2$.  The action of $P$ on $\mathbb{R}^2$ preserves the integer lattice and the slopes of lines, hence it preserves $X_i$ and its decoration by lines.  Thus it extends simplicially to $\widetilde{W}$.  Let $W = \widetilde{W}/P$ be the corresponding quotient.

The action of $P$ on $\widetilde{W}$ is free and properly discontinuous, so $W$ inherits the structure of a manifold with boundary.  In particular, if we let $\Sigma_i$ be the four-punctured sphere $X_i/P$, then $\partial W = \Sigma_1\sqcup \Sigma_4$.   Since the action of $P$ on $\mathbb{R}^2$ preserves slopes of lines, each edge in the ideal triangulation of $\Sigma_i$ has a well-defined slope. Appealing to Theorem II.2.4 of \cite{SW}, we observe that the link complement $M$ is homeomorphic to the quotient of $W$ given by folding $\Sigma_1$ along edges of slope $1/2$ and $\Sigma_4$ along edges of slope $1/4$, identifying the pairs of triangles sharing the edges.

 \begin{figure}[h] 
   \centering
   \includegraphics[width=2.7in]{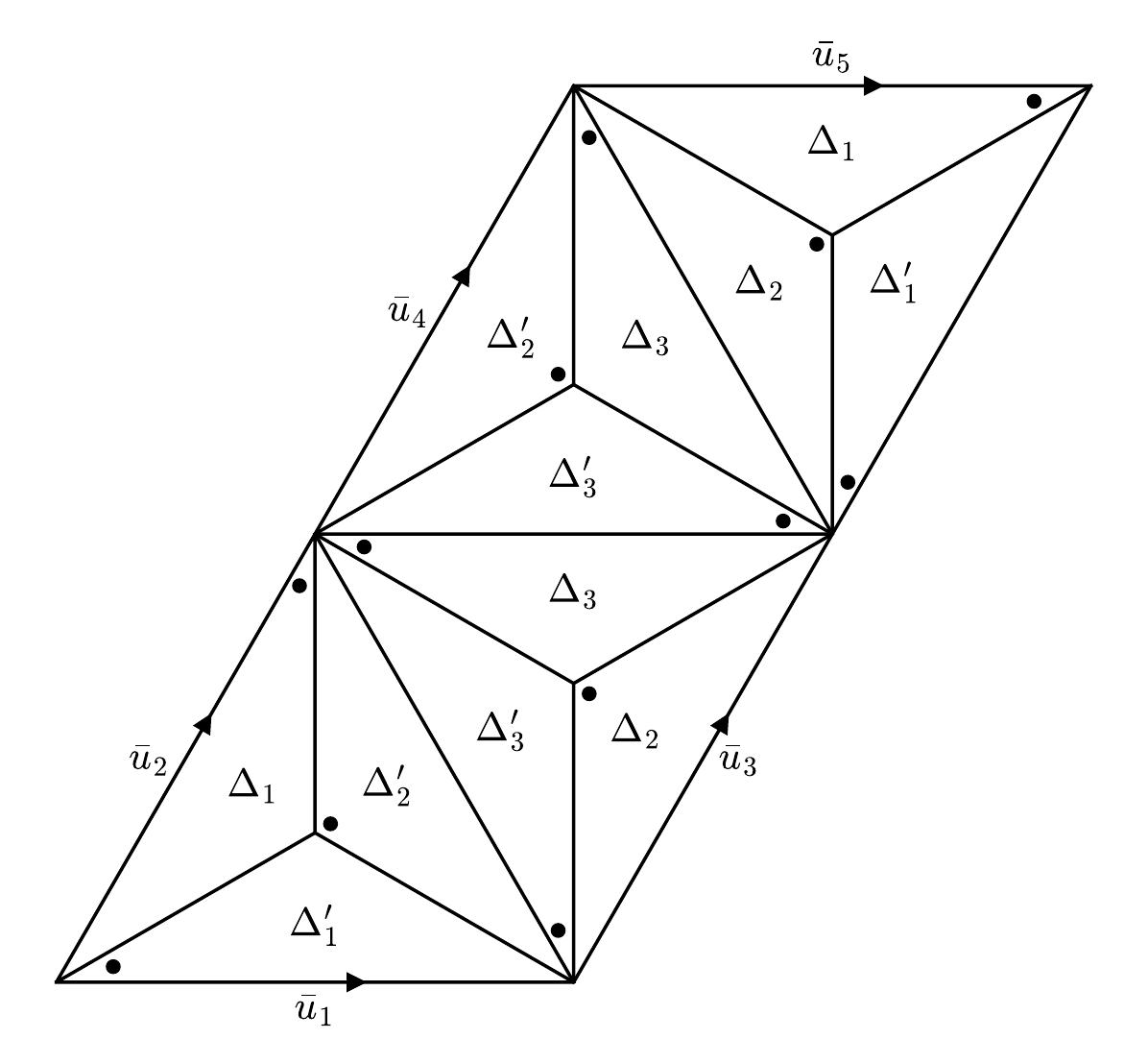} 
   \caption{The induced triangulation of the cross section of $c_1$.}
   \label{fig: Cusp1}
\end{figure}

A fundamental domain for the action of $P$ on $\widetilde{W}$ consists of a union of two adjacent tetrahedra $\widetilde{\Delta}_i$ and $\widetilde{\Delta}_i'$ in each layer, and it follows that $M$ has an ideal triangulation made up of six ideal tetrahedra. Let $\Delta_i$ and $\Delta_i'$ be the images of $\widetilde{\Delta}_i$ and $\widetilde{\Delta}_i'$ in $M$ and classify their edges as top, bottom, or side edges according to their lifts.  

We pause in our discussion momentarily to notice that, with this construction, it is easy to identify certain useful involutions of $M$.  Consider the group generated by the order-2 Euclidean rotations of $\mathbb{R}^2$ around the points of $\frac{1}{2}\mathbb{Z}^2$. The elements of this group preserve the integer lattice and the slopes of lines, so they induce an action on $\widetilde{W}$. They normalize $P$ and thus induce an action on $W=\widetilde{W}/P$, which descends to the further quotient $M$ because slopes are preserved.  For instance, if we rotate about the midpoint of a lift of a side edge of $\Delta_1$ we obtain a simplicial involution $\sigma_1 \co M \to M$ which interchanges $\Delta_1$ and $\Delta'_1$.  

Similarly, we have a simpicial involution $\sigma_2 \co M \to M$ which inverts the top edges of $\Delta_3$ and $\Delta'_3$.  These edges all have slope $0$ and are side edges of the remaining four tetrahedra.  It follows that 
\begin{align*}
\sigma_2(\Delta_3)&= \Delta_3 & \sigma_2(\Delta_j)&= \Delta'_j,
\end{align*}
where $j=1,2$.  By considering the combinatorics of the triangulation, we also conclude that $\sigma_2$ interchanges the cusps of $M$.

Returning to our discussion of $\mathcal{D}_0(M)$, we assign indeterminate edge parameters $z_i$ and $z_i'$ to the respective top edges of the tetrahedra $\Delta_i$ and $\Delta_i'$.  Assign parameters to all remaining edges using the usual conventions and the functions $\zeta_1$ and $\zeta_2$.  After noticing that the folding quotient identifies all edges of slopes $0$ and $1$ in $\Sigma_1$ and all edges of slopes $1/3$ and $2/7$ in $\Sigma_4$, we see that the following collection of edge equations define $\mathcal{D}_0(M)$.
\begin{align*}
(1/2) && 1&=z_2 \, \zeta_2(z_1) \,\zeta_2(z_1') \\
(1/2) && 1 &= z_2' \, \zeta_2(z_1) \, \zeta_2(z_1') \\
(1/4) && 1&=z_{2} \,\zeta_1(z_{3}) \,\zeta_1(z_{3}') \\
(1/4) && 1 &= z_{2}' \,\zeta_1(z_{3}) \,\zeta_1(z_{3}')\\
(0:1) &&  1&= z_1 \, z_1' \, z_3 \, z_3' \, \left( \zeta_1(z_1) \, \zeta_1(z_1')\right)^2 \,  \left( \zeta_1(z_2) \, \zeta_1(z_2')\right)^2 \\
(1/3:2/7):&&1&= z_1 \, z_1' \, z_3 \, z_3' \,  \left( \zeta_2(z_2) \, \zeta_2(z_2')\right)^2 \, \left( \zeta_2(z_3) \, \zeta_2(z_3')\right)^2.
\end{align*}
We have labeled each of these equations with the slope(s) of the edge from which it arises.  We note that this triangulation has two edges of slope $1/2$, one a top edge of $\Delta_2$ and one of $\Delta_2'$.  Similarly for the edges of slope $1/4$.  

By the first pair of equations, we have
\begin{align*}\label{zee two}
 	z_2=z_2'=\frac{z_{1}' z_{1}}{{\left(z_{1}' - 1\right)} {\left(z_{1} - 1\right)}}\end{align*}
and we can eliminate the parameters $z_2$ and $z'_2$ from the equations that follow.  This also means that the second equation in the second pair is redundant.  Finally, the product of the right hand sides of the last four equations is one.  Therefore, a point of $\mathcal{D}_0(r_3)$ is determined by its coordinates $(z_1, z_1', z_3, z_3')$ and  
 \begin{align*}
(1/4) && 1&= \frac{z_1' z_1}{(z_1 - 1)(z_1' - 1)(z_3 - 1)(z_3' - 1)}\\
 (0:1) &&1&= \frac{z_1 z_1' z_3 z_3' (z_1-1)^2 (z_1'-1)^2}{(z_1+z_1'-1)^4}.
\end{align*}
These equations are obtained from those above by substituting for $z_2$ and $z_2'$.

The combinatorics of the induced triangulation of the cross section of one of the cusps is indicated in Figure \ref{fig: Cusp1}.  We will refer to this cusp as $c_1$ and to the other as $c_2$.  In the figure, one of the ideal vertices corresponding to the cusp is placed at $\infty \in \bound \H^3$ and the horizontal face of $\Delta_1'$ is taken to be the ideal triangle with vertices $\{ 0, 1, \infty\}$.  The edge parameters of the tetrahedra and the gluing pattern of the Sakuma-Weeks triangulation determine the remainder of the image. The convention in the figure is that the dots are placed in the corners whose vertical edges are labeled $z_j$ or $z_j'$. 

Identifying opposite sides of the parallelogram in the Figure yields a cross section $T_1$ of $c_1$, and we obtain a meridian-longitude pair of curves generating $\pi_1 T_1$ by taking $\fm$ to be the horizontal side (a single edge) and $\fl$ the diagonal side. We are interested in the locus of $\bz\in\mathcal{D}_0(M)$ where $c_1$ is complete, which is the curve determined by setting $\mu(\fm)(\bz) = 1$.  From Equation (\ref{mu fun}) and Figure \ref{fig: Cusp1}, we get
\[ \mu(\fm)(\bz) = - \zeta_1(z_2')z_3\zeta_1(z_2)\zeta_1(z_1)z_1\zeta_1(z_1') = -\frac{z_1(1-z_1)(1-z_1')z_3}{(1-z_1-z_1')^2}. \]

Requiring that  $\mu(\fm)(\bz)=1$ results in a formula for $z_3$ in terms of $z_1$ and $z_1'$ and, together with equation $(0:1)$, we get a similar formula for $z_3'$.   In particular,
\begin{align*}
 & z_3 = -\frac{(1-z_1-z_1')^2}{z_1(1-z_1)(1-z_1')} && z_3' = -\frac{(1-z_1-z_1')^2}{z_1'(1-z_1)(1-z_1')}. \end{align*}
After plugging these into equation $(1/4)$, clearing denominators, and simplifying, we find that the locus $\{\bz\in\mathcal{D}_0(M)\,|\,\mu(\fm)(\bz) = 1\}$ is determined by the set of $(z_1,z_1')\in (\mathbb{C}-\{0,1\})^2$ satisfying 
\[0= (z_1'^3 z_1 + z_1'^{2} z_1^{2} + z_1' z_1^3 - z_1'^2 - 3   z_1' z_1 - z_1^2 + 2   z_1' + 2   z_1 - 1)\cdot(z_1' + z_1 - 1).\]
 


\begin{lemma}\label{inance} For $M$, triangulated and with cusp cross section $T_1$ and its meridian $\fm$ chosen as above, the curve component $V_0$ of $\{\bz\in\mathcal{D}_0(M)\,|\,\mu(\fm)(\bz) = 1\}$ containing the complete structure $\bz^0$ is isomorphic to the complex affine variety determined by
\[ p(z_1,z_1') = z_1'^3 z_1 + z_1'^{2} z_1^{2} + z_1' z_1^3 - z_1'^2 - 3   z_1' z_1 - z_1^2 + 2   z_1' + 2   z_1 - 1. \]
\end{lemma}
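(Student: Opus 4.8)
The plan is to leverage the explicit elimination carried out just above the statement in order to realize $V_0$ concretely as a plane curve via projection. Recall that on the locus $\{\bz \in \mathcal{D}_0(M) \mid \mu(\fm)(\bz) = 1\}$ the remaining edge parameters are pinned down by the displayed formulas $z_2 = z_2' = z_1 z_1'/[(z_1-1)(z_1'-1)]$, $z_3 = -(1 - z_1 - z_1')^2/[z_1(1-z_1)(1-z_1')]$ and $z_3' = -(1 - z_1 - z_1')^2/[z_1'(1-z_1)(1-z_1')]$, while $(z_1, z_1')$ is constrained to $\{p \cdot (z_1 + z_1' - 1) = 0\}$. First I would consider the projection $\Pi \co \C^6 \to \C^2$ onto the $(z_1,z_1')$-coordinates. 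The three displayed formulas constitute a rational section of $\Pi$, so $\Pi$ restricts to an injective morphism from this locus to $\C^2$ with a rational inverse; it is therefore an isomorphism onto its image, and the entire problem reduces to understanding that image inside $\{p \cdot (z_1 + z_1' - 1) = 0\}$.

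Next I would identify which irreducible piece carries the complete structure. The crucial point is that the linear factor $z_1 + z_1' - 1$ is spurious, having been introduced when denominators were cleared: along the line $z_1 + z_1' = 1$ the formulas give $z_3 = z_3' = 0$, a degenerate edge parameter excluded from $\mathcal{D}_0(M)$, so no nondegenerate point of $\{\mu(\fm)=1\}$ projects onto it. Since $\bz^0$ is the complete structure, its coordinates are all nondegenerate (in particular $z_3 \neq 0$), so $\Pi(\bz^0)$ lies on $\{p=0\}$ rather than on the line. (A direct computation confirms this: the complete structure has $z_1 = z_1' = \tfrac{3+i\sqrt{3}}{6}$, a root of $3t^2 - 3t + 1$, which indeed divides $p(t,t) = (3t^2 - 3t + 1)(t^2 + t - 1)$, whereas $z_1 + z_1' = 1 + i/\sqrt{3} \neq 1$.) To upgrade ``the component of $\{p=0\}$ through $\Pi(\bz^0)$'' to the \emph{full} curve $\{p=0\}$, I would then prove that $p$ is irreducible over $\C$, so that $\{p=0\}$ is a single irreducible curve.

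I expect the geometric irreducibility of $p$ to be the main obstacle. Viewing $p$ as a cubic in $z_1$ with leading coefficient $z_1'$ and constant term $-(z_1'-1)^2$, irreducibility over $\C[z_1,z_1']$ is equivalent (by Gauss's lemma, the cubic having no root in $\C(z_1')$) to the absence of a linear factor $\alpha(z_1') z_1 + \beta(z_1')$ with $\alpha \mid z_1'$ and $\beta \mid (z_1'-1)^2$ in $\C[z_1']$; ruling out the finitely many resulting candidate roots $-\beta/\alpha$ is a direct, if tedious, verification (readily confirmed by computer algebra). The remaining delicate matter is the precise sense of the isomorphism at the two points $(0,1)$ and $(1,0)$, which are exactly where $\{p=0\}$ meets the spurious line — one computes $p(z_1, 1 - z_1) = -z_1^2(z_1-1)^2$ — and where the rational section has poles (for instance $z_2 \to \infty$ as one approaches $(0,1)$ along $\{p=0\}$). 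A short check using the edge equations $(1/4)$ and $(0:1)$ shows that away from these two points every derived parameter stays in $\C - \{0,1\}$, so the projection realizes $V_0$ precisely as $\{p=0\}$ with these two degenerate, non-hyperbolic points removed. Once $p$ is known to be irreducible, assembling these observations yields the stated identification of $V_0$ with the affine curve determined by $p$.
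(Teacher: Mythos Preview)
Your plan is correct and follows the same architecture as the paper's proof: pass to the $(z_1,z_1')$-plane via the rational section, show that the complete structure lands on $\{p=0\}$ rather than on the line $z_1+z_1'=1$, and prove $p$ is irreducible over $\C$. The execution of the two key steps differs, and the comparison is worth noting.

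For irreducibility, the paper avoids your Gauss/rational-root case analysis by shifting variables, setting $z_1'=x+1$ and $z_1=y$ to obtain
\[
q(x,y)=x^3y+x^2y^2+xy^3+3x^2y+2xy^2+y^3-x^2,
\]
and then applying Gao's Newton-polygon criterion: the sequence of primitive edge vectors $(1,1),(-1,1),(-1,1),(-1,0),(2,-3)$ has no proper zero-sum subsequence, so the polygon is integrally indecomposable and $q$ (hence $p$) is absolutely irreducible. This is quicker than your approach, where the ``finitely many candidate roots $-\beta/\alpha$'' are really one-parameter families (units in $\C[z_1']$ are all of $\C^*$), so each case is a small elimination rather than a single evaluation.

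For ruling out the line, the paper instead uses the involution $\sigma_1$ exchanging $\Delta_1$ and $\Delta_1'$: by Mostow rigidity this forces $z_1=z_1'$ at $\bz^0$, which combined with $z_1+z_1'=1$ would make $z_1$ real. Your route via the degeneracy $z_3=z_3'=0$ along the line and the direct computation $z_1=z_1'=(3+i\sqrt3)/6$ is more elementary and just as effective. One small caution: as defined in the paper, $\mathcal{D}_0(M)$ consists of \emph{all} solutions of the cleared polynomial edge equations, so degenerate parameters are not formally excluded; your argument is carried by the observation that $\bz^0$ itself has $z_3\ne 0$ (equivalently, by your direct computation), not by the degeneracy being outlawed in $\mathcal{D}_0(M)$.
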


\begin{proof} We first observe that $p$ is irreducible over $\C$. Let $q$ be the polynomial obtained by setting $z_1'=x+1$ and $z_1=y$.  Then
 \[ q(x,y) = x^3 y + x^2 y^2 + x y^3 + 3  x^2 y + 2  x y^2 + y^3 - x^2\]
 and $p$ is irreducible over if and only if $q$ is. By the irreducibility criterion given in \cite{G}, it is enough to show that the Newton polygon $N$ for $q$ is integrally indecomposable.  Following the approach given therein, we consider the sequence 
 \[ \{ (1,1), (-1,1), (-1,1), (-1,0), (2,-3) \}\]
of vectors corresponding to the edges of $N$.  Since there is no proper subsequence which sums to the zero vector, $N$ is integrally indecomposable, so $q$ and $p$ are irreducible.

As observed above the Lemma, the locus $\{\bz\in\mathcal{D}_0(M)\,|\,\mu(\fm)(\bz) = 1\}$ contains the graph of $(z_2,z_2',z_3,z_3')$, given as functions of $(z_1,z_1')$ by the formulas above, over the locus $p=0$. 

It remains to show that $\bz^0$ does not satisfy $z_1'+z_1 - 1 =0$.  Recall, that the involution $\sigma_1$ interchanges $\Delta_1$ and $\Delta'_1$.  By Mostow rigidity, $\sigma_1$ is represented by an isometry of the complete hyperbolic structure on $M$.  Hence, it must be true that at $\bz^0$, $z_1 = z_1'$.  Since this cannot happen if $z_1'+z_1 - 1 =0$, our proof is complete.
\end{proof}

\begin{prop}\label{622shape}
With the hypothesis and notation of Lemma \ref{inance}, 
\[ \tau_{c_1} = \frac{z_1}{1-z'_1} + \frac{z'_1}{1-z_1} \]
is a cusp parameter function for $c_1$ on the curve $V_0$.
\end{prop}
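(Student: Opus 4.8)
The plan is to apply Proposition~\ref{ratfun} directly, using the explicit triangulation of $M$ and the cusp cross-section $T_1$ of Figure~\ref{fig: Cusp1}. By Definition~\ref{cusp param}, a cusp parameter function for $c_1$ has the form $\tau_{c_1} = \tau(\fl)/\tau(\fm)$, where $\fm$ and $\fl$ are the meridian and longitude curves in $T_1$ chosen above (the horizontal and diagonal sides of the parallelogram). As in the Whitehead computation of Section~\ref{whitehead def}, the idea is to select the reference edge $f$ of Proposition~\ref{ratfun} to be the meridian edge $\fm$ itself, so that $\tau(\fm)=1$ with no computation, and then $\tau_{c_1} = \tau(\fl)$. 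This reduces the task to computing $\tau(\fl)$ from formula~(\ref{tau}).

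First I would read off from Figure~\ref{fig: Cusp1} the ordered vertex sequence $\{v_0,\ldots,v_{m-1},v_0\}$ encountered along the diagonal curve $\fl$, and for each vertex $v_j$ identify the tetrahedra lying to the right of $\fl$ whose triangles meet $T_1$ at $v_j$. For each $j$ I would then form $w_j$ as the product of the edge parameters of the edges of these tetrahedra passing through $v_j$, using the labelling convention (dots mark the corners whose vertical edges carry $z_j$ or $z_j'$) together with the functions $\zeta_1,\zeta_2$ from~(\ref{edge fun}) and the right-hand rule of Figure~\ref{fig: shapes}. The term $w_0$ is handled specially: since $f=\fm$ is the first edge of $\fl$, formula~(\ref{tau}) forces $w_0=1$. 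With the $w_j$ in hand, I would substitute into
\[ \tau(\fl)(\bz) = \sum_{l=0}^{m-1} (-1)^l \prod_{j=0}^l w_j. \]

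Next I would restrict to the curve $V_0$ where $c_1$ is complete. On $V_0$ we have $\mu(\fm)=1$, and the formulas for $z_2,z_2',z_3,z_3'$ in terms of $(z_1,z_1')$ derived above Lemma~\ref{inance} let me eliminate all parameters except $z_1$ and $z_1'$. After this substitution and algebraic simplification, the expression for $\tau(\fl)$ should collapse to the claimed symmetric form $\frac{z_1}{1-z_1'} + \frac{z_1'}{1-z_1}$. The symmetry here is a useful sanity check: the involution $\sigma_1$ interchanges $\Delta_1$ and $\Delta_1'$ (hence swaps $z_1 \leftrightarrow z_1'$), and the cusp parameter should transform predictably under it. I expect the main obstacle to be purely bookkeeping: correctly identifying which tetrahedral corners sit to the right of $\fl$ at each vertex and assigning the correct one of $z_j$, $\zeta_1(z_j)$, or $\zeta_2(z_j)$ to each edge — an error-prone combinatorial step where the dot convention and orientation must be tracked with care. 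Once the $w_j$ are correct, the remaining work is the routine algebraic simplification on $V_0$, and the result follows from Proposition~\ref{ratfun}(3), which guarantees that $\tau(\fl)/\tau(\fm)$ represents the cusp parameter at every $\bz\in V_0$ near $\bz^0$.
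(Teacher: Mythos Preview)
Your approach is exactly the paper's: choose the reference edge $f=\fm$ so that $\tau(\fm)=1$, compute $\tau(\fl)$ from formula~(\ref{tau}) by reading off the $w_j$ from Figure~\ref{fig: Cusp1}, then restrict to $V_0$ and simplify using the expressions for $z_2,z_2',z_3,z_3'$ in terms of $(z_1,z_1')$.

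One concrete slip: your claim that ``$f=\fm$ is the first edge of $\fl$'' and hence $w_0=1$ is not correct. The curves $\fm$ (the single horizontal edge) and $\fl$ (the diagonal side) share only their initial vertex $v_0$, not an initial edge. So $w_0$ must be computed as the product of edge parameters of the triangles at $v_0$ lying between $f$ and the first edge of $\fl$. In the paper's computation this gives $w_0 = z_1'\,\zeta_1(z_1)$, and with $m=2$ one obtains
\[
\tau(\fl) = w_0 - w_0 w_1 = z_1'\,\zeta_1(z_1) - z_1'\,\zeta_1(z_1)\,z_1\,\zeta_2(z_2')^2\,\zeta_2(z_3')^2\,z_3,
\]
which then simplifies on $V_0$ to $\dfrac{z_1}{1-z_1'}+\dfrac{z_1'}{1-z_1}$. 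This is precisely the bookkeeping hazard you flagged; once corrected, your outline matches the paper's proof.
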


\begin{proof}
Take $\tau_{c_1} = \tau(\fl)/\tau(\fm)$.  To compute $\tau(\fl)$ and $\tau(\fm)$, we refer to Figure \ref{fig: Cusp1} and Equation (\ref{tau}) from Proposition \ref{ratfun} and choose $\fm$ as the reference edge $f$.  Hence,
\begin{align*}
\tau(\fm)&= 1 & \tau(\fl)&=z'_1 \zeta_1(z_1) - z'_1 \zeta_1(z_1) z_1 \zeta_2(z'_2)^2 \zeta_2(z'_3)^2z_3.
\end{align*}
In the function field for $V_0$, the rational function $\tau(\fl)$ is equivalent to the function given in the statement of this proposition.
\end{proof}

\begin{cor}\label{nonconst_cusp} The cusp parameter function $\tau_{c_1}$ is non-constant on $V_0$.
\end{cor}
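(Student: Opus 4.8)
The plan is to regard $\tau_{c_1}$ as an element of the function field $\mathbb{C}(V_0)$ and to show directly that it equals no constant, arguing by contradiction. Under the isomorphism of Lemma \ref{inance} I would work in the coordinates $(z_1,z_1')$, so that $V_0$ is identified with the affine curve $V(p)$ cut out by the irreducible polynomial $p$. Suppose then that $\tau_{c_1}\equiv c$ on $V_0$ for some $c\in\mathbb{C}$. Putting $\tau_{c_1}=\frac{z_1}{1-z_1'}+\frac{z_1'}{1-z_1}$ over the common denominator $(1-z_1)(1-z_1')$, the difference $\tau_{c_1}-c$ equals $N_c/[(1-z_1)(1-z_1')]$, where
\[ N_c = z_1 - z_1^2 + z_1' - z_1'^2 - c\,(1-z_1)(1-z_1') \]
is a polynomial of total degree $2$ whose $z_1^2$-coefficient is $-1$; in particular $N_c\neq 0$ for every choice of $c$. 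Away from the finitely many points of $V_0$ where $(1-z_1)(1-z_1')=0$, the identity $\tau_{c_1}=c$ holds exactly where $N_c=0$, and since $V_0$ is an irreducible curve this cofinite subset is Zariski dense, so $\tau_{c_1}\equiv c$ forces $N_c$ to vanish on all of $V_0$.

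Next I would convert the vanishing of $N_c$ into a divisibility statement using the irreducibility of $p$ proved in Lemma \ref{inance}. Because $p$ is irreducible over $\mathbb{C}$ and $\mathbb{C}[z_1,z_1']$ is a UFD, the ideal $(p)$ is prime, hence radical, so $I(V_0)=(p)$ and a polynomial vanishes on $V_0$ precisely when it is divisible by $p$. Thus the previous step would give $p\mid N_c$. The contradiction then comes from a degree count: $p$ has total degree $4$ (carried by its terms $z_1'^3 z_1$, $z_1'^2 z_1^2$, and $z_1' z_1^3$), whereas $N_c$ is nonzero of degree $2$, and a nonzero polynomial of degree $2$ cannot be a multiple of an irreducible polynomial of degree $4$. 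This rules out $p\mid N_c$, completing the contradiction and showing that $\tau_{c_1}$ is non-constant on $V_0$.

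The only step requiring any care is the equivalence between constancy of the rational function $\tau_{c_1}$ and divisibility of the explicit numerator $N_c$ by $p$; this rests on $I(V_0)=(p)$, which in turn is exactly the irreducibility of $p$ already established. Everything else is a routine degree comparison, so I do not expect any serious obstacle. As an alternative one could bypass the ideal-theoretic argument entirely by evaluating $\tau_{c_1}$ at two explicit points of $V_0$ — for example the complete structure, where $z_1=z_1'$ takes the regular-ideal-tetrahedron value $e^{i\pi/3}$, together with one further solution of $p=0$ — and checking that the two values differ; but the degree argument is cleaner since it avoids solving $p=0$ and applies uniformly in $c$.
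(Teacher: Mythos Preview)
Your proof is correct but follows a genuinely different route from the paper's.  The paper argues by exhibiting two points of $V_0$ at which $\tau_{c_1}$ takes different values: at the complete structure $\bz^0$ the value is the modulus of a genuine Euclidean torus and hence non-real, while specializing $z_1'=-1$ reduces $p$ to the real cubic $-z_1^3+4z_1-4$, which has a real root $r\ne 1$, and at $(r,-1)$ the formula $\tau_{c_1}=\frac{z_1}{1-z_1'}+\frac{z_1'}{1-z_1}$ is visibly real.  Your argument instead leverages the irreducibility of $p$ directly via a degree count, showing that constancy would force the degree-$4$ irreducible $p$ to divide a nonzero degree-$2$ polynomial.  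Your approach is cleaner in that it avoids locating any points on $V_0$ and works uniformly in $c$, at the cost of invoking that $I(V_0)=(p)$; the paper's approach is more hands-on and ties the non-constancy to a geometrically meaningful value.  Amusingly, the ``alternative'' you sketch at the end --- evaluating at the complete structure and one further point --- is essentially what the paper does.
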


\proof Take $\tau_{c_1}$ as given in Proposition \ref{622shape}.  At $\bz^0$, cusp cross-sections have the structure of genuine Euclidean tori, so the translations corresponding to $\fm$ and $\fl$ are linearly independent over $\mathbb{R}$.  In particular, $\tau_{c_1}(\bz^0) \notin \R$.

On the other hand, if we take $z_1'=-1$, the polynomial $p$ specializes to the real cubic $-z_1^3+4z_1-4$. This has a real root $r \neq 1$.  It is clear that $\tau_{c_1}(-1,r) \in \R$, so $\tau_{c_1}$ is not constant on $V_0$. 
\endproof

\begin{cor}\label{eree}\CorEEE\end{cor}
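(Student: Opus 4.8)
The plan is to deduce Corollary \ref{eree} from part (2) of Corollary \ref{TwoCpt def} by exhibiting a failure of geometric isolation for the link $6^2_2$. The statement to be proved asserts that at most finitely many knot complements obtained by Dehn filling one cusp of $M = S^3 - 6^2_2$ have hidden symmetries. I first invoke the fundamental characterization of Neumann--Reid \cite[Prop.~9.1]{NeumReid}: any knot complement with hidden symmetries covers an orbifold with a rigid cusp. Thus if infinitely many knots obtained by filling the cusp $c_2$ had hidden symmetries, then infinitely many such fillings would cover orbifolds with rigid cusps, and the hypotheses of Corollary \ref{TwoCpt def} would be met with $c = c_1$ and $c' = c_2$.

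The key point is that Corollary \ref{TwoCpt def}(2) would then force the cusp parameter function $\tau_{c_1}$ to be constant on the irreducible component $V_0$ of $\{\bz \in \mathcal{D}_0(M) \,|\, \mu(\fm)(\bz) = 1\}$ containing the complete structure $\bz^0$. But this directly contradicts Corollary \ref{nonconst_cusp}, which establishes that $\tau_{c_1}$ is non-constant on $V_0$. This contradiction is exactly the mechanism I would use: the geometric isolation that would be required by the hidden-symmetries hypothesis is incompatible with the explicit computation of the cusp parameter function carried out in Proposition \ref{622shape} and Corollary \ref{nonconst_cusp}.

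Concretely, I would argue as follows. Suppose for contradiction that infinitely many hyperbolic knot complements in $S^3$ with hidden symmetries arise by Dehn filling one cusp of $M$. Since $M$ has an involution $\sigma_2$ exchanging its two cusps (established earlier via the Sakuma--Weeks construction), I may assume without loss of generality that infinitely many such knots arise by filling $c_2$. By \cite[Prop.~9.1]{NeumReid} each of these knot complements covers an orbifold with a rigid cusp, so the hypothesis of Corollary \ref{TwoCpt def} holds for the cusp $c_1$ with $c' = c_2$. Its conclusion (2) then asserts that $\tau_{c_1}$ is constant on $V_0$, contradicting Corollary \ref{nonconst_cusp}. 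Hence only finitely many such knots can exist, which is the desired conclusion.

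I expect the genuine mathematical content to already be discharged by the preceding results, so the proof of Corollary \ref{eree} itself is essentially a bookkeeping argument assembling them. The main conceptual subtlety worth a sentence of care is the reduction to filling a single cusp: one must note that a knot complement is obtained by filling \emph{one} cusp, and that the cusp-swapping involution $\sigma_2$ lets us normalize which cusp plays the role of $c'$, so that the single non-isolation computation for $c_1$ suffices to cover both cases. No new computation is required beyond citing Corollary \ref{nonconst_cusp}.
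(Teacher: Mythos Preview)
Your proposal is correct and follows essentially the same approach as the paper: combine Corollary \ref{nonconst_cusp} with Corollary \ref{TwoCpt def}(2), then use the cusp-exchanging involution $\sigma_2$ to handle both cusps. The paper phrases the last step by citing Fact \ref{involution} to ``promote the result to both cusps,'' whereas you use $\sigma_2$ at the outset to reduce without loss of generality to filling $c_2$; these are equivalent, and your explicit appeal to \cite[Prop.~9.1]{NeumReid} is already absorbed into the final sentence of Corollary \ref{TwoCpt def}.
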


\begin{proof} This is an immediate consequence of Corollary \ref{nonconst_cusp} and Corollary \ref{TwoCpt def}.   Since the simplicial involution $\sigma_2$ exchanges the cusps of $M$, Fact \ref{involution} promotes the result to both cusps.
\end{proof}

\section{Numerical examples}\label{two component}

Sections \ref{whitehead} and \ref{sixtwotwo} addressed link complements whose cusp shapes cover rigid Euclidean orbifolds and hence satisfy criterion (1) of Corollary \ref{TwoCpt}. This behavior is uncommon among two-component link complements.  Here we exhibit many links whose complements lack this property, and hence yield hyperbolic knot complements under Dehn surgery which generically lack hidden symmetries. 

We use the fact below to convert criterion (1) of Corollary \ref{TwoCpt} into a condition that can be checked by computer.   We will use SnapPy \cite{SnapPy} and Sage \cite{sagemath}.

\begin{fact}\label{cusp field}  If the shape of a cusp $c$ of a complete, finite-volume hyperbolic $3$-manifold covers a rigid Euclidean orbifold then the \mbox{\rm cusp field of $c$}, the smallest field containing its cusp parameter, is either $\mathbb{Q}(i)$ or $\mathbb{Q}(\sqrt{-3})$.\end{fact}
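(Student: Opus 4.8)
The plan is to trace the covering down to the level of lattices and then read off the cusp field as the fraction field of a ring of integers. First I would dispose of a well-definedness point: if $\tau$ and $\tau'$ are the moduli of one torus relative to two oriented generating pairs, then they differ by the action of an element of $\mathrm{PSL}_2(\Z)$, so $\tau' = (c+d\tau)/(a+b\tau)$ with $a,b,c,d\in\Z$, whence $\Q(\tau')=\Q(\tau)$. Thus the cusp field $\Q(\tau)$ is independent of the chosen generators, and since the complex modulus is an orientation-preserving similarity invariant, it is also independent of the chosen horospherical cross-section. So it suffices to compute $\Q(\tau)$ for one convenient normalization.

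Next I would set up the covering at the level of groups. Write the rigid orbifold as $\R^2/\Gamma$, where $\Gamma$ is one of the Euclidean triangle groups $(2,4,4)$, $(2,3,6)$, $(3,3,3)$ or its index-two orientation-preserving subgroup, and realize the cusp cross-section as an oriented Euclidean torus $T=\C/\Lambda$. An orbifold covering $T\to\R^2/\Gamma$ exhibits $\Lambda$ as a finite-index subgroup of $\Gamma$; being torsion-free, it consists of translations and so lies in the translation subgroup $L\le\Gamma$, making $\Lambda$ a finite-index sublattice of $L$. Because $L$ is invariant under the rotation subgroup of $\Gamma$, which contains a rotation of order $4$ in the $(2,4,4)$ case and one of order $3$ in the $(2,3,6)$ and $(3,3,3)$ cases, $L$ is a square lattice in the first case and a hexagonal lattice in the other two. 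After an orientation-preserving similarity (which changes neither $\tau$ nor the conclusion) I may therefore assume $L=\Z[i]$ in the $(2,4,4)$ case and $L=\Z[\omega]$ with $\omega=e^{2\pi i/3}$ in the $(2,3,6)$ and $(3,3,3)$ cases; write $\mathcal{O}$ for this ring of integers.

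Then I would finish with a degree count. Any oriented generating pair $(\mu,\lambda)$ for $\Lambda\subseteq\mathcal{O}$ gives $\tau=\lambda/\mu\in\mathrm{Frac}(\mathcal{O})$, which is $\Q(i)$ in the first case and $\Q(\sqrt{-3})$ in the others; hence $\Q(\tau)\subseteq\mathrm{Frac}(\mathcal{O})$, a quadratic extension of $\Q$. On the other hand $\tau$ lies in $\mathbb{H}^2$, so it has positive imaginary part, is not real, and therefore $\Q(\tau)\ne\Q$. Since no field lies strictly between $\Q$ and a quadratic extension of it, we conclude $\Q(\tau)=\Q(i)$ or $\Q(\sqrt{-3})$, as asserted.

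The only genuinely non-routine step, and the one I would be most careful about, is the passage from the orbifold covering to the containment $\Lambda\le L\le\Gamma$: I need that the deck group of a manifold (torus) cover of the orbifold is torsion-free and hence purely translational, and that invariance under a rotation of order $3$ or $4$ pins down the similarity type of $L$ as hexagonal or square respectively. Everything after that — the well-definedness of the cusp field and the final degree argument — is formal.
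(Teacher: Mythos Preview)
Your proof is correct and follows essentially the same approach as the paper, which simply asserts that the Euclidean lattice uniformizing the cusp cross-section lies in a $(2,4,4)$-, $(3,3,3)$-, or $(2,3,6)$-triangle group. You have filled in the details the paper leaves implicit---the well-definedness of the cusp field, the passage to the translation sublattice, the normalization to $\Z[i]$ or $\Z[\omega]$, and the degree argument forcing equality rather than mere containment.
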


This follows from the fact that the Euclidean lattice uniformizing a horospherical cross section of such a cusp $c$ lies in a $(2,4,4)$-, $(3,3,3)$-, or $(2,3,6)$-triangle group.  

\subsection{Links with at most nine crossings}\label{9x} The  two-component links with at most $9$ crossings are tabulated in \cite[App.~C]{Rolfsen}. By inspection, each such link has at least one unknotted component. The resulting census is built in SnapPy, and we used the following sequence of commands in Sage to check the cusp field of each cusp of each hyperbolic link complement in it.

\begin{quote}  \texttt{sage: import snappy}\\
	\texttt{sage: for M in snappy.LinkExteriors(num\_cusps=2)[:90]:}\\
	\texttt{....: \quad N = M.high\_precision()}\\
	\texttt{....: \quad if N.volume() > 1:}\\
	\texttt{....: \qquad c0 = N.cusp\_info(0)}\\
	\texttt{....: \qquad c1 = N.cusp\_info(1)}\\
	\texttt{....: \qquad print (M, factor(c0.modulus.algdep(20)),}\\\color{white}.\color{black}
		\hspace{2.2in}\texttt{factor(c1.modulus.algdep(20)))}\\
	\texttt{....:}\end{quote}

Reading the above from the top, the first line calls SnapPy from within Sage.  (This is assuming that the two programs have been configured to work with each other, see the SnapPy website for instructions on how to accomplish this.)  The second line iterates over the pre-built table of triangulated complements of links up to 10 crossings in SnapPy; we have restricted to two-component links through 9 crossings. The third calls a high-precision version $N$ of each manifold $M$. The next line excludes the non-hyperbolic examples, which SnapPy computes to have $0$ or infinitesimal volume. (These are $4^2_1$, $6^2_1$, $7^2_7$, $8^2_1$, $9^2_{43}$, $9^2_{49}$, $9^2_{53}$ and $9^2_{61}$. Each can be directly shown to be non-hyperbolic.)

The next two lines pull information on the two cusps of $M$. In particular ``\texttt{c0.modulus}'' and ``\texttt{c1.modulus}'' are the shapes of these cusps relative to their ``geometrically preferred'' basis, consisting of the shortest and second-shortest translations. We finally print the notation from \cite[App.~C]{Rolfsen} for the link corresponding to $M$, together with factored polynomials satisfied by each of $z$ and $w$.  In particular, the command ``\texttt{z.algdep($n$)}'' finds a polynomial of degree at most $n$ that is (approximately) satisfied by the (approximate) algebraic number $z$.

We regard the routine above as having failed on a manifold $M$ if either of the two polynomials produced has degree $20$. When run with high precision as above, this only happens on a handful of examples (described below). Moreover, as a double check, we compare the above data to what is generated by the routine below:

\begin{quote}  \texttt{sage: import snappy}\\
	\texttt{sage: for M in snappy.LinkExteriors(num\_cusps=2)[:90]:}\\
	\texttt{....: \quad if M.volume() > 1:}\\
	\texttt{....: \qquad T = M.trace\_field\_gens()}\\
	\texttt{....: \qquad T.find\_field(prec=200, degree=20, optimize=True)}\end{quote}
	
The command ``\texttt{trace\_field\_gens()}'' produces a set of generators for the trace field of $M$ as ApproximateAlgebraicNumbers, a data structure that the Sage command ``\texttt{find\_field()}'' takes as input for an LLL-type algorithm (see eg.~\cite[\S 3]{CGHN}) that seeks a minimal polynomial for the field that they generate. The arguments ``prec'' and ``degree'' respectively specify the precision that the algorithm uses and the maximum degree polynomial it seeks. This routine as written above fails (ie.~terminates with no output) only on the link $9^2_{22}$, but for this link, increasing the precision to 300 produces a minimal polynomial of degree $12$, the largest observed among the two-component links up to nine crossings.

It is well known (and follows from the proof of Prop.~\ref{regfun}, see especially equation (\ref{see?})), that each cusp field of $M$ is contained in the trace field of $M$. Comparing degrees of the polynomials generated by the two routines above, we find that in fact equality holds in all cases for which the first routine does not fail. The links for which it does fail are $9^2_{18}$, $9^2_{22}$, $9^2_{25}$, $9^2_{34}$, $9^2_{35}$ and $9^2_{39}$, but after randomizing the triangulation of $S^3-9^2_{34}$ it succeeds and again gives equality.

Of the remaining links, the complements of the last two have trace fields of odd degree, so they cannot have quadratic cusp fields. And $S^3 - 9^2_{22}$ also has no quadratic cusp field since its trace field $K$, while of even degree, has no quadratic subfield as shown by the following Sage commands that produce an empty list $S$:

\begin{quote} \texttt{sage: K.<a> = NumberField(}\ [$p$]\ \texttt{)}\\
	\texttt{sage: S = K.subfields(degree=2)}\end{quote}

\noindent (Here ``$p$'' is the minimal polynomial for $K$ identified by \texttt{M.trace\_field\_gens()}.)

Unfortunately the same Sage commands, applied to the (shared) trace field of $S^3-9^2_{18}$ and $S^3-9^2_{25}$, show that it does have $\mathbb{Q}(i)$ as a subfield. For these we use \texttt{M.cusp\_translations()} (taking $M$ to be the complement of each in turn), which produces a list of explicit translations generating the lattices of the two cusps. For each cusp of each complement, we then experiment with changes of basis until we find a pair whose ratio can be handled by \texttt{algdep}. In all cases the degree of the resulting polynomial is $12$, matching that of the trace field.
	
After all this, out of the $82$ hyperbolic two-component links tested, only $5^2_1$, $6^2_2$, $7^2_8$, $8^2_{15}$, and $9^2_{47}$ have unknotted cusps with cusp field $\mathbb{Q}(i)$ or $\mathbb{Q}(\sqrt{-3})$.  Every other two-component link with at most nine crossings fails criterion (1) of Corollary \ref{TwoCpt}.  Hence, for these links, Dehn surgery on one component yields at most finitely many hyperbolic manifolds with hidden symmetries.

In fact, the complements of $7^2_8$, $8^2_{15}$, and $9^2_{47}$ are each isometric to that of the Whitehead link $5^2_1$. This can be verified with SnapPy, but it can also be seen directly.  The unknotted component of $7^2_8$ bounds a disk which intersects its other component, a trefoil knot, twice.  Cutting along this disk, rotating by multiples of $2\pi$, and regluing yields a sequence of isometries between these links' complements.

The complements of $5^2_1$ (hence also of $7^2_8$, $8^2_{15}$, and $9^2_{47}$) and $6^2_2$ do have rigid cusps and satisfy Corollary \ref{TwoCpt} (1). But these examples were considered individually earlier in this paper. We have proved:

\begin{thm_star}\label{data}\DataThm\end{thm_star}

\subsection{Non-AP knots}\label{non-AP}

As mentioned in the introduction, a knot $K$ is \textit{AP} if every closed incompressible surface $S\subset S^3-\mathcal{N}(K)$ contains an essential closed curve that bounds an annulus immersed in $S^3-\mathcal{N}(K)$ with its other boundary component on $\partial \mathcal{N}(K)$. While most low-complexity knots are AP, the class of \textit{non}-AP knots seems most likely to provide new examples with hidden symmetries \cite{BoiBoCWa2}.  Here we exhibit a family of non-AP knots $K_n \subset S^3$ whose complements generically lack hidden symmetries.  In particular, a knot is not AP if its complement contains a closed incompressible, anannular surface. The $K_n$ have this property.

We first construct the $K_n$. Consider the link $K \sqcup K' \subset S^3$, both components of which are unknotted. For $n\geq 3$ let $p_n \co S^3 \to S^3$ be the $n$-fold cyclic branched cover, branched over $K'$, that is an orbifold cover of the $(n,0)$-surgery on $K'$ with the standard framing. Since $K$ and $K'$ have linking number one, the preimage $K_n$ of $K$ is a knot in $S^3$ for each $n$. Let $M_n = S^3-K_n$.

For the embedded two-sphere $S_0$ indicated in Figure \ref{pmpy link}, define $S_n = p_n^{-1}(S_0)$.  It is easy to see that $S_n$ is connected, hence that $M_n \backslash S_n$ consists of two components.  Let $\widetilde{N}_n$ be the non-compact component and $\widetilde{N}'_n$ the other one.  The branched cover $p_n$ restricts to branched covers $\widetilde{N}_n \to N$ and $\widetilde{N}'_n \to N'$, where $N$ is the non-compact component of $\left(S^3-K\right) \backslash S_0$.


\begin{figure}[h]
\begin{tikzpicture}[scale=1.1]

%
%
%
%
%
%
%
%
%
%

\node at (.8,-1.4) {$K'$};
\node at (1.75,.35) {$K$};
\node at (-1.2,0) {$S_0$};

\draw [color=gray] (0,0) circle [radius=1];
\draw [thick] (0,-1) -- (0,-0.25);
\draw [thick] (0,0.25) -- (0,1);
\draw [thick] (0.7,-1.21) arc (-60:60:1.4);

\draw [thick] (0,-1) to [out=-90, in=210] (0.7,-1.21);
\draw [thick] (0,1) to [out=90, in=150]  (0.7,1.21);

\draw [thick] (1,0.1) to [out=0,in=90] (1.15,0) to [out=-90,in=0] (1,-0.1);
\fill [color=white] (1.06,-0.08) circle [radius = 0.05];

\draw [thick] (1.25,0.45) to [out=190,in=80] (1.06,0.13);
\fill [color=white] (1.4,0.15) circle [radius=0.1]; 
\draw [thick] (1.4,0.45) to [out=0, in=0] (1.4,0.15) to [out=180,in=170] (1.35,-0.15);
\fill [color=white] (1.35,-0.45) circle [radius=0.1];
\draw [thick] (1.45,-0.2) to [out=-10, in=0] (1.4,-0.45) to [out=180,in=-90] (1.055,0.045);

\draw [thick] (1,0.1) to [out=180,in=45] (0.3986,-0.2281);
\draw [thick] (1,-0.1) to [out=180,in=45] (0.4505,-0.3155);
\fill [color=white] (0,-0.5) circle [radius=0.1];
\draw [thick] (0.2581,-0.3686) arc (-55:-262:0.45);
\draw [thick] (0.3155,-0.4505) arc (-55:-264:0.55);

\draw [thick] (0.3182,0.3182) arc (45:82:0.45);
\draw [thick] (0.3889,0.3889) arc (45:84:0.55);
\draw [thick] (0.3182,0.3182) to [out=-45,in=120] (0.55,0);
\draw [thick] (0.3889,0.3889) to [out=-45,in=120] (0.65,0.05);

\draw [thick] (0.62,-0.25) to [out=-60,in=30] (0.55,-0.45) to [out=200,in=-60]  (0.4,-0.35) to [out=120,in=-30] (0.1,0.1) to [out=150,in=-90] (0,0.25);
\draw [thick] (0.72,-0.2) to [out=-60,in=30] (0.6,-0.55) to [out=210,in=-60] (0.3,-0.35) to [out=120,in=0] (0.1,-0.1) to [out=180,in=90] (0,-0.25);


\end{tikzpicture}
\caption{The link $K \sqcup K'$ and the 2-sphere $S_0$.}
\label{pmpy link}
\end{figure}
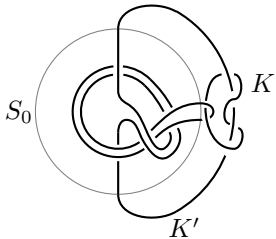

The compact component $\widetilde{N}_n'$ of $M_n\backslash S_n$ is the manifold $M_{n,1}$ of \cite{ZP} (see especially their Figure 5), so by the main result of that paper it admits a complete hyperbolic structure with finite volume and geodesic boundary. This implies that $S_n$ is incompressible and anannular in $\widetilde{N}_n$, see eg. \cite[p.~395]{CheDe}. 

Similarly, Theorem 1.1 of \cite{Frigerio_graphcomp} shows that $\widetilde{N}_n$ admits a complete hyperbolic structure with finite volume and geodesic boundary $S_n$. This is because $\widetilde{N}_n$ is the knot complement $M_g$ in a handlebody studied in \cite{Frigerio_graphcomp}, for $g= n-1$, as can be seen from Figure 1 there. The complement of an open regular neighborhood of the ``graph'' component $\Gamma^1_g$ there is a handlebody $N_g$ of genus $g+1$, and the pair $(N_g,\Gamma_g^0)$ has a rotational symmetry of order $g+1$ about a vertical axis in that Figure. The quotient of $N_g$ by this symmetry is a ball; and the union of the rotation's fixed locus with the ``knot'' component $\Gamma_g^0$ projects to a tangle in this ball isotopic to the intersection of $K\sqcup K'$ with the outside of $S_0$.

It thus follows that $S_n$ is incompressible and annanular in $\widetilde{N}'_n$, hence also in $M_n$.

\begin{prop_star}\label{pmpy}\PMPY\end{prop_star}

\begin{proof} For $n \geq 3$, take $K_n$ and $M_n$ as described above the Proposition.  As we've seen,  $K_n$ is not AP.  Moreover, Thurston's geometrization for Haken manifolds shows that each $M_n$ is hyperbolic.  Let $O_n$ be the hyperbolic orbifold with underlying space $S^3-K$ obtained by $(n,0)$-surgery on $K'$, and recall from above that $p_n$ restricts to an orbifold cover $M_n\to O_n$. We therefore conclude that $\mathrm{vol}\,M_n = n\cdot\mathrm{vol}\,O_n\to\infty$ as $n\to\infty$, as the volumes of the $O_n$ limit to that of $S^3-L$.

We finally claim that $M_n$ lacks hidden symmetries for all but finitely many $n$. Recall that the figure-eight is the only knot whose complement is arithmetic \cite{ReidFig8}. Because the figure-eight knot complement does not contain a closed essential surface,  every $M_n$ is non-arithmetic. Therefore, there is a unique minimal orbifold $Q_n$ in the commensurability class of $M_n$, and by \cite[Prop.~9.1]{NeumReid}, $Q_n$ has a rigid cusp for each $n$ such that $M_n$ has hidden symmetries. 

It follows that $O_n$ covers the rigid-cusped orbifold $Q_n$ for each $n$ such that $M_n$ has hidden symmetries. If this held for infinitely many $n$ then, by criterion (1) of Corollary \ref{TwoCpt}, the shape of the cusp of $S^3-L$ corresponding to $K$ would cover a rigid Euclidean orbifold. Computations with SnapPy and Sage show that this is not so. SnapPy identifies $L$ as the link L12n739 from the Hoste-Thistlethwaite table. Then running SnapPy within Sage as described in Section \ref{9x} shows that the trace field has degree 11, which rules out a cusp field of $\mathbb{Q}(i)$ or $\mathbb{Q}(\sqrt{-3})$.\end{proof}

\section{Other examples from the literature}\label{examples}

\begin{example}\label{2 3 n}  
Suppose that $n$ is odd and $L\subset S^3$ is the link pictured in Figure \ref{23n limit}.  This link is isotopic to the mirror image of the link $9_{59}^2$.  In particular, Theorem \ref{data} applies to $L$.  Since the $(-2,3,n)$-pretzel knot is obtained by $\frac{2}{n-1}$ Dehn filling on the unknotted component of $L$, at most finitely many $(-2,3,n)$ pretzel knot complements have hidden symmetries.

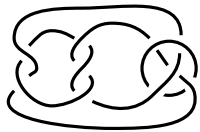
\begin{figure}[h]
\begin{tikzpicture}[scale=0.4]


\draw [very thick] (1.25,-0.25) to [out=45, in=-90] (1.5,0) to [out=90, in=-90] (0.75,1) to [out=90, in=180] (3,2) to [out=0, in=90] (6.3,1.1);
\draw [very thick] (1.25,0.75) to [out=45, in=180] (2,1.25) to [out=0, in=150] (2.75,1);
\draw [very thick] (1,0.25) to [out=-135, in=180] (2,-1.25) to [out=0,in=-45] (3.25,-0.25);
\draw [very thick] (2.75,-0.75) to [out=135, in=225] (3,0) to [out=45,in=-45] (3.25,0.75);
\draw [very thick] (2.75,0.25) to [out=135, in=225] (3,1) to [out=45, in=180] (4,1.5) to [out=0, in=135] (5.25,1);

\draw [very thick] (5.9,0) circle [radius=0.9];
\fill [color=white] (5.5,-0.7) circle [radius=0.3];
\fill [color=white] (6.7,-0.6) circle [radius=0.3];

\draw [very thick] (6.25,0.5) to [out=-90, in=45] (5.35,-0.9) to [out=-135, in=-25] (3.35,-1.1);
\draw [very thick] (5.5,0.6) to (5.85,0.1);
\draw [very thick] (0.75,-0.75) to [out=-135, in=180] (4,-2) to [out=0, in =-80] (6.75,-0.85) to [out=100,in=-45] (6.25,-0.25);

%
%
%
%
%

\end{tikzpicture}
\caption{This link is isotopic to the mirror image of the link $9_{59}^2$ from Rolfsen's tables.}
\label{23n limit}
\end{figure}

In fact, Macasieb--Mattmann \cite{MacMat} have shown that \textit{no} hyperbolic $(-2,3,n)$ pretzel knot complements have hidden symmetries, a stronger conclusion.  But our methods require only a SnapPy/Sage computation on $S^3-9^2_{59}$.  They also apply equally well to, for instance, the $(2,3,n)$-pretzel knots which are obtained from filling the unknotted component of a mirror image of $9^2_{57}$.\end{example}

\begin{example}\label{potential} 
Here we will combine Corollary \ref{TwoCpt} with computations of Aaber-Dunfield \cite{AabD} to show that at most finitely many knot complements with hidden symmetries can be obtained from the complement $W$ of the $(-2,3,8)$-pretzel link by Dehn filling the unknotted component.  We thank Nathan Dunfield for pointing out the relevance of the results of \cite{AabD} to our project.

We will apply Corollary \ref{TwoCpt}(2) since, as described in \cite{AabD}, $W$ is obtained by identifying faces of a single regular ideal octahedron so its cusps are square (covering $(2,4,4)$ triangle orbifolds). Let $c_1$ and $c_2$ be respective cusps corresponding to the the knotted and unknotted components of the $(-2,3,8)$-pretzel link.  In \cite{AabD}, they choose standard generators $l_j$ and $m_j$ for the peripheral subgroups of $\pi_1 W$.  They set $v_j = \log \mu(-l_j)$ and $u_j=\log \mu(m_j)$ and, in their proof of Theorem 5.5, show that the $v_j$ and $u_j$ are related by
\[ v_j(\bu) = \frac{1}{2}\, \frac{\partial \Phi}{\partial u_j}\]
where
\[ \Phi(u_1, u_2) = i\, (u_1^2+u_2^2) - \frac{3-i}{96} \, (u_1^4+u_2^4) - \frac{1+i}{16}\, (u_1^2u_2^2) + O\left(|\bu|^6\right). \]
The corresponding cusp parameter functions are given by $\tau_{c_j} = v_j/u_j$ so we obtain
\[ \tau_{c_1}(u_1, u_2) = i -   \frac{3-i}{48} \,  u_1^2  - \frac{1+i}{16} \, u_2^2 +  \hdots \]
The locus of points in $\mathcal{D}_0$ for which $c_1$ remains complete is $\left\{ \bz \, \mid \, u_1=0 \right\}$ and $\tau_{c_1}$ is non-constant on this curve.  Our conclusion therefore follows from Corollary \ref{TwoCpt def} (2) as claimed.

\end{example}

\begin{example}\label{protoBerge} Here we will apply Corollary \ref{TwoCpt} to (re-)prove that Dehn filling one cusp of the \textit{Berge manifold} $M = S^3 - L$, where $L$ is a certain two-component link in $S^3$ (see \cite[Fig.~1]{Hoffman_3comm}), produces at most finitely many hyperbolic knot complements with hidden symmetries. This was first established in the proof of \cite[Theorem 1.1]{Hoffman_3comm} by Hoffman, whose later result \cite[Theorem 6.1]{Hoffman_hidden} implies that in fact no hyperbolic knot complement with hidden symmetries is produced by Dehn filling a cusp of $M$. This stronger assertion is out of reach of Corollary \ref{TwoCpt def}.

$M$ is triangulated by four regular ideal tetrahedra (see eg.~\cite{FGGTV}). It therefore covers the Bianchi orbifold $\mathrm{PSL}(2,\mathcal{O}_3)$ and thus satisfies condition (1) of Corollary \ref{TwoCpt def}, so we will use condition (2). SnapPy finds an involution of $M$ exchanging the two cusps. As only one component of $L$ is unknotted, this involution does not extend to $S^3$; nor does it preserve the triangulation by regular ideal tetrahedra, since this induces different triangulations of the cusps' cross sections. Nonetheless, by appealing to Fact \ref{involution} we may conclude that neither cusp is geometrically isolated from the other after checking this for only one of them.

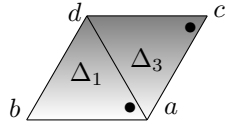
\begin{figure}
\begin{tikzpicture}[scale=0.8]
\begin{scope}[xscale=2, yscale=2]
\shade [](0,0)--(1,0)--(1.5,.865)--(.5,.865)--(0,0);
\draw (0,0)--(1,0);
\draw (0,0)--(.5,.865);
\draw(1,0)--(.5,.865);
\draw (1,0)--(1.5,.865);
\draw (.5,.865)--(1.5,.865);
\node at (.86,.1){$\bullet$};
\node at (1.36, .765){$\bullet$};
\node at (.5,.38){$\Delta_1$};
\node at (1,.5){$\Delta_3$};
\node at (1.2,.1){$a$};
\node at (-.1,.1){$b$};
\node at (1.6,.9){$c$};
\node at (.4,.9){$d$};
\end{scope}
\end{tikzpicture}
\caption{The induced triangulation of a cusp cross section}
\label{Berge cusp}
\end{figure}

We check the cusp $c$ corresponding to the unknotted component of $L$. A cross section inherits the triangulation pictured in Figure \ref{Berge cusp}, with parallel sides identified by translations. (The triangulation may be extracted from Regina \cite{regina} after entering the isometry signature ``jLLzzQQccdffihhiiqffofafoaa'' of $M$. This invariant of hyperbolic three-manifolds, introduced by Burton \cite{Burton}, can be computed by SnapPy.) Taking $\fm$ to be the projection of the horizontal sides of the parallelogram, equation (\ref{mu fun}) yields:
\[ \mu(\mathfrak{m}) = -\zeta_1(z_1)\zeta_1(z_3)z_3 = \frac{-z_3}{(1-z_1)(1-z_3)}. \]
The triangulation's edge equations are:
\begin{align}
	f_1 = 0,\ & \mbox{where}\ f_1(z_1,z_2,z_3,z_4) = z_4(1-z_3) - z_3 z_2(1-z_4)(1-z_1)\label{gluingberge1}\\
	f_2 = 0,\ & \mbox{where}\ f_2(z_1,z_2,z_3,z_4) =z_4(1-z_3) - z_1(1-z_2)\label{gluingberge2}
\end{align}
Equations (\ref{gluingberge1}) and (\ref{gluingberge2}) cut out $\mathcal{D}_0(M)$ in $\mathbb{C}^4$. To show that $c$ is not geometrically isolated from the other cusp of $M$, we need to show that the cusp parameter function $\tau_c$ is non-constant on the irreducible component $V_0$ that contains the discrete, faithful representation, of the algebraic subset of $\mathcal{D}_0(M)$ where $\mu(\fm) = 1$. By the above, this subset is cut out by
\[ f_3 = 0,\ \mbox{where}\ f_3(z_1,z_2,z_3,z_4) =z_3 + (1-z_1)(1-z_3) = 1- z_1(1-z_3). \]
To compute $\tau_c$ we take $\fl$ to be the projection of the diagonal sides of the parallelogram in Figure \ref{Berge cusp}, oriented from $b$ to $d$, and appeal to Proposition \ref{ratfun}. Letting the reference edge $f$ of the Proposition equal $\fm$, we obtain $\tau_c = \zeta_2(z_1)$, which is constant on $V_0$ if and only if $z_1$ is.

We could proceed as in Section \ref{sixtwotwo} to show that $z_1$ is non-constant on $V_0$, but for variety's sake we will use a simple calculus-based approach here.  Specifically, we will use the implicit function theorem to show that $z_2$ is a parameter for $V_0$ near the point $\vec{\eta} \doteq (\eta,\eta,\eta,\eta)$, for $\eta = \frac{1+\sqrt{-3}}{2}$, corresponding to the complete hyperbolic structure, and that $\frac{d^2z_1}{dz_2^2}\neq 0$ at $\vec{\eta}$. (As the given triangulation of the complete hyperbolic manifold $M$ is by regular ideal tetrahedra, $\vec{\eta}$, which we note is a fixed point of $\zeta_1$ and $\zeta_2$, corresponds to the complete structure. And since $\tau_c$ is an even function of $\log z_2$, we have $\frac{dz_1}{dz_2} = 0$.)

By the implicit function theorem, $z_2$ is a parameter for $V_0$ (and hence non-constant on it) in a neighborhood of $\vec{\eta}$ if and only if the partial derivative matrix \[M_2=
 \left(\frac{\partial f_i}{\partial z_j}(\vec{\eta})\right)_{j = 1,3,4}^{i=1,2,3} \]
is non-singular. A straightforward computation shows that it is. Implicit function theorem also implies that 
 $(\frac{d z_1}{d z_2}, \frac{d z_3}{d z_2}, \frac{d z_4}{d z_2})^{T}=-M_2^{-1} \vec{v}$
 around $\vec{\eta}$, where $\vec{v}= (\frac{\partial f_1}{\partial z_2}, \frac{\partial f_2}{\partial z_2}, \frac{\partial f_3}{\partial z_2})^{T}$.  Using this formula and chain rule repeatedly, one can write all higher power derivatives of $z_i$ with respect to $z_2$ in terms of all the $z_i$'s. From \cite{Mathematica} we see that, $\frac{d^2z_1}{dz_2^2}=\frac{i}{\sqrt{3}}\ne 0$ at $\vec{\eta}$ so $z_1$ is non-constant around $\vec{\eta}$ inside $V_0$, and by Corollary \ref{TwoCpt def}(2), $c$ is not isolated from the other cusp of $M$. As we observed above it now follows from Fact \ref{involution} that the other, knotted cusp of $M$ is also not isolated from $c$.\end{example}

\bibliographystyle{plain}
\bibliography{hidden_bib}

\end{document}